\documentclass[psamsfonts,reqno]{amsart}
\usepackage{amssymb,xy,eucal}
\usepackage[usenames]{color}

\usepackage{amscd}

\usepackage{graphics}

\usepackage{mathrsfs}

\xyoption{all}

\hyphenation{triv-i-al non-triv-i-al emp-ty non-emp-ty ho-mo-mor-phism}

\theoremstyle{plain}

\newtheorem{lemma}{Lemma}[section]
\newtheorem{theorem}[lemma]{Theorem}

\newtheorem{corollary}[lemma]{Corollary}

\newtheorem*{theorem*}{Theorem}

\newtheorem*{stat}{\name}
\newcommand{\name}{testing}

\theoremstyle{definition}
\newtheorem{definition}[lemma]{Definition}

\theoremstyle{remark}
\newtheorem{remark}[lemma]{Remark}
\newtheorem{notation}[lemma]{Notation}

\newtheorem*{remark*}{Remark}

\newcommand{\qedc}{{\qed}~{\rm Claim~{\theclaim}.}}
\newcommand{\qedsc}{{\qed}~{\rm Claim.}}

\numberwithin{equation}{section}

\newcommand{\pup}[1]{\textup{(}{#1}\textup{)}}

\newcommand{\set}[1]{\{#1\}}
\newcommand{\setm}[2]{\set{#1\mid#2}}

\newcommand{\alg}[1]{\left\langle{#1}\right\rangle}

\newcommand{\Pow}{\mathfrak{P}}

\newcommand{\cC}{\mathcal{C}}

\newcommand{\cF}{\mathcal{F}}
\newcommand{\cG}{\mathcal{G}}
\newcommand{\cH}{\mathcal{H}}

\newcommand{\cR}{\mathcal{R}}
\newcommand{\cS}{\mathcal{S}}
\newcommand{\cT}{\mathcal{T}}
\newcommand{\cV}{\mathcal{V}}

\newcommand{\bA}{\boldsymbol{A}}
\newcommand{\bB}{\boldsymbol{B}}

\newcommand{\bF}{\boldsymbol{F}}

\newcommand{\bH}{\boldsymbol{H}}
\newcommand{\bK}{\boldsymbol{K}}
\newcommand{\bR}{\boldsymbol{R}}
\newcommand{\bS}{\boldsymbol{S}}

\newcommand{\bcH}{\boldsymbol{\mathcal{H}}}

\newcommand{\BA}{\mathbb{A}}

\newcommand{\BU}{\mathbb{U}}

\newcommand{\res}{\mathbin{\restriction}}

\newcommand{\card}[1]{|#1|}

\DeclareMathOperator{\Sub}{Sub}
\DeclareMathOperator{\gra}{graph}

\DeclareMathOperator{\Con}{Con}

\DeclareMathOperator{\Var}{{\bf{Var}}}
\DeclareMathOperator{\QVar}{{\bf{QVar}}}

\DeclareMathOperator{\Hom}{Hom}

\subjclass[2010]{
08C20, 
08B10 
}

\keywords{natural duality, dualizable algebra, Abelian algebra}

\begin{document}

\title{Finite Abelian algebras are dualizable}

\author[P.~Gillibert]{Pierre Gillibert}
\address{Pontificia Universidad  Cat\'olica de Valpara\'iso, Instituto de Matem\'aticas, Valpara\'iso, Chile}
\email{pierre.gillibert@ucv.cl, pgillibert@yahoo.fr}

\date{\today}

\begin{abstract}
A finite algebra $\bA=\alg{A;\cF}$ is \emph{dualizable} if there exists a discrete topological relational structure $\BA=\alg{A;\cG;\cT}$, compatible with $\cF$, such that the canonical evaluation map $e_{\bB}\colon \bB\to \Hom( \Hom(\bB,\bA),\BA)$ is an isomorphism for every~$\bB$ in the quasivariety generated by~$\bA$. Here, $e_{\bB}$ is defined by $e_{\bB}(x)(f)=f(x)$ for all $x\in B$ and all $f\in \Hom(\bB,\bA)$.

We prove that, given a finite congruence-modular Abelian algebra~$\bA$, the set of all relations compatible with~$\bA$, up to a certain arity, \emph{entails} the whole set of all relations compatible with~$\bA$. By using a classical compactness result, we infer that~$\bA$ is dualizable. Moreover we can choose a dualizing alter-ego with only relations of arity $\le 1+\alpha^3$, where $\alpha$ is the largest exponent of a prime in the prime decomposition of $\card{A}$.

This improves Kearnes and Szendrei result that modules are dualizable, and Bentz and Mayr's result that finite modules with constants are dualizable. This also solves a problem stated by Bentz and Mayr in 2013. 
\end{abstract}

\maketitle

\section{Introduction}

A (finite) structure~$\BA$ is an \emph{alter-ego} of an algebra~$\bA$, if both have the same underlying set $A$ and each operation, each relation and each partial operation of~$\BA$ is compatible with all operations of~$\bA$. We also add the discrete topology to~$\BA$.

Under this assumption, given $\bB\in\QVar\bA$ (that is,~$\bB$ is a subalgebra of a power of~$\bA$), the set $\Hom(\bB,\bA)$ of all homomorphisms $\bB\to\bA$ is a closed subset of $\BA^B$ and is stable under each operation of~$\BA$. We denote the corresponding substructure of~$\BA^B$ by~$\bB^*$.

Similarly, given~$\BU$ a closed substructure of~$\BA^X$ (for some nonempty set $X$), the set $\Hom(\BU,\BA)$ of all continuous homomorphisms $\BU\to\BA$ is a subalgebra of~$\bA^U$. We denote the corresponding subalgebra of~$\BA^B$ by~$\BU^+$.

The \emph{evaluation map} is the map $e_B\colon \bB \to \bB^{*+}$ defined by $e_B(x)(f)=f(x)$ for all $x\in B$ and all $f\in\Hom(\bB,\bA)$.

The evaluation map is always an embedding of algebras. The alter-ego~$\BA$ \emph{dualizes}~$\bA$ if $e_B$ is an isomorphism for each $B\in\QVar\bA$. An algebra~$\bA$ is \emph{dualizable} if there exists an alter-ego~$\BA$ that dualizes~$\bA$.

Independently Z\'adori in \cite {Z}, and Davey, Heindorf, and McKenzie in \cite{DHM} prove the following characterization of dualizable algebras in congruence-distributive varieties (see also \cite[10.2.2]{CDBook}).

\begin{theorem*}[Z\'adori, Davey, Heindorf, and McKenzie]
A finite algebra, generating a congruence-distributive variety, is dualizable if and only if it has a near-unanimity term.
\end{theorem*}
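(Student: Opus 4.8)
The statement splits into a sufficiency and a necessity half, and I would treat them by entirely different techniques.

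\textbf{Sufficiency (near-unanimity term $\Rightarrow$ dualizable).} Suppose $\bA$ carries a near-unanimity term of arity $n+1$. The plan is to take as alter-ego the structure $\BA$ equipped with \emph{all} subalgebras of $\bA^n$ as fundamental relations, together with the discrete topology, and to prove directly that $e_B$ is surjective for every $\bB\in\QVar\bA$. The engine is the Baker--Pixley theorem: the presence of the near-unanimity term guarantees that every subalgebra $R$ of a finite power of $\bA$ is \emph{$n$-determined}, i.e.\ $R$ equals the set of all tuples whose projection onto each $n$-element set of coordinates lies in the corresponding projection of $R$. To exploit this, fix $\alpha\in\Hom(\bB^{*},\BA)$ and consider the subalgebra $R\le\bA^{\Hom(\bB,\bA)}$ that is the image of the natural map $b\mapsto(f(b))_{f}$. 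For any $n$ homomorphisms $f_1,\dots,f_n\in\Hom(\bB,\bA)$ the projection $r=\setm{(f_1(b),\dots,f_n(b))}{b\in B}$ is a subalgebra of $\bA^n$, hence one of the relations of $\BA$; since $(f_1,\dots,f_n)$ satisfies $r$ coordinatewise in $\bB^{*}$ and $\alpha$ preserves $r$, the tuple $(\alpha(f_1),\dots,\alpha(f_n))$ lies in $r$, which is exactly the $n$-ary projection of $R$ onto these coordinates. As this holds for every choice of $n$ coordinates, $n$-determinedness forces $\alpha\in R$, that is, $\alpha=e_B(b)$ for some $b\in B$. Since $e_B$ is always an embedding, this proves the duality.

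\textbf{Necessity (dualizable and congruence-distributive $\Rightarrow$ near-unanimity term).} I would argue by contraposition: assuming $\bA$ has no near-unanimity term of any arity, I would show that no alter-ego dualizes it. The first reduction is the duality compactness theorem: if $\bA$ were dualizable it would already be dualized by the alter-ego $\BA_n$ whose relations are all compatible relations of arity at most $n$, for some finite $n$. It therefore suffices to show that $\BA_n$ fails to dualize for \emph{every} $n$. For fixed $n$, the failure of the near-unanimity condition together with Baker--Pixley yields a subalgebra $S\le\bA^m$ that is not $n$-determined, so there is a tuple $\ba$ lying in every $n$-ary projection of $S$ but not in $S$ itself. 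The goal is to manufacture from $\ba$ a \emph{ghost}: an element of $\Hom(S^{*},\BA_n)$ that is not of the form $e_S(s)$, witnessing the failure of $\BA_n$ to dualize.

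The main obstacle is exactly this ghost construction, and it is where congruence-distributivity enters decisively. The naive candidate $\alpha(g)=g(\ba)$ is meaningless, since $g$ is only defined on $S$; instead one must define $\alpha$ on an arbitrary homomorphism $g\colon S\to\bA$ and verify that it respects \emph{all} relations of arity at most $n$, not merely the coordinate projections. Here I would invoke J\'onsson's description of congruence-distributive varieties: the subdirectly irreducible quotients of members of $\Var\bA$ are tightly controlled, so every homomorphism $S\to\bA$ effectively factors through a projection onto boundedly many coordinates. This finite dependence lets one read off $\alpha(g)$ from the value of $g$ on a suitable element $s_I\in S$ that agrees with $\ba$ on the relevant coordinate block $I$, and the $n$-determinedness of $S$ should make these local choices cohere into a genuine morphism preserving every relation of arity at most $n$. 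Since $\ba\notin S$, the resulting $\alpha$ is not an evaluation, so $\BA_n$ fails to dualize; as $n$ was arbitrary, $\bA$ is not dualizable. Verifying the coherence and relation-preservation of $\alpha$ is the delicate combinatorial core of the argument, and I expect it, rather than the sufficiency half, to absorb essentially all of the work.
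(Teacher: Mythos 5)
The paper states this theorem purely as a cited background result (Z\'adori \cite{Z}; Davey, Heindorf and McKenzie \cite{DHM}; see also \cite[10.2.2]{CDBook}) and contains no proof of it, so I am judging your argument on its own. Your sufficiency half is the standard NU-duality argument, but as written it misapplies Baker--Pixley: when $\bB$ is infinite, $\Hom(\bB,\bA)$ is infinite, and a subalgebra of an \emph{infinite} power of $\bA$ need not be determined by its $n$-ary projections (for the two-element lattice, which has a majority term, the eventually constant sequences form a proper sublattice of $\two^{\mathbb{N}}$ whose projection onto every pair of coordinates is all of $\{0,1\}^2$). The repair is routine --- run your argument for finite $\bB$, where $\Hom(\bB,\bA)$ is finite, observe that the alter-ego carrying all subalgebras of $\bA^n$ is of finite type, and invoke the Duality Compactness Theorem --- but it has to be said, and you only invoke that theorem in the other half.

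The necessity half has a genuine gap at its very first step. The Duality Compactness Theorem states that a finite-type alter-ego dualizing every \emph{finite} member of $\QVar\bA$ dualizes all members; it does \emph{not} state that a dualizable algebra is already dualized by the set $\BA_n$ of all compatible relations of arity at most $n$ for some $n$. That latter claim is an instance of the finite-type problem and is not available, so proving that every $\BA_n$ fails does not prove that $\bA$ is non-dualizable. The correct strategy is the ghost-element method: produce a single (necessarily infinite) $\bB\in\QVar\bA$ and a continuous $\alpha\colon\Hom(\bB,\bA)\to A$ that is \emph{locally an evaluation} --- it agrees with some $e_B(b)$ on each finite subset of $\Hom(\bB,\bA)$, which is exactly what preservation of all compatible relations of all arities amounts to --- yet is not an evaluation; such an $\alpha$ defeats every alter-ego simultaneously. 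Your finite witnesses $\bS\le\bA^m$ with $\ba\notin S$ cannot carry such a ghost, because on a finite algebra ``locally an evaluation'' already implies ``an evaluation''; one must glue the witnesses for all $n$ into one infinite algebra, and verifying that the glued ghost is locally an evaluation is precisely where the content of \cite{DHM} lies. Even your per-$n$ ghost is underspecified: by J\'onsson's lemma a homomorphism $g\colon\bS\to\bA$ factors through roughly $\log_2\card{A}$ coordinates, so for a $k$-ary relation the relevant coordinate blocks union to a set of size up to $k\log_2\card{A}>n$, on which $\ba$ need not agree with any element of $S$; well-definedness and relation-preservation therefore require starting from a failure of $N$-determinedness for $N$ substantially larger than $n$, a calibration absent from your sketch.
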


Moreover Mar\'oti proves in \cite{Ma} that the existence of a near-unanimity term in a finitely generated variety is decidable. He concludes that the dualizibility of finite algebras generating congruence-distributive varieties is decidable. The equivalent problem for finite algebras generating congruence-modular varieties is still open. However several results are known.

Abelian groups are dualizable (finite discrete case of Pontryagin duality). Quackenbush and Szab\'o prove in \cite{QS} that non-Abelian nilpotent groups are \emph{inherently} non-dualizable (any group containing a non-Abelian nilpotent subgroup is not dualizable). Nickodemus finally proved in \cite{N} that groups with only Abelian Sylow subgroups are dualizable. This give the following characterization of dualizable groups.

\begin{theorem*}[Nickodemus, Quackenbush, and Szab\'o]
A finite group is dualizable if and only if all its Sylow subgroups are Abelian.
\end{theorem*}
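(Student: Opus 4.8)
The plan is simply to assemble the two cited results, since each direction of the biconditional is supplied by exactly one of them. For the ``only if'' direction I would argue by contraposition. Suppose $\bG$ is a finite group possessing a non-Abelian Sylow subgroup $\bS$. Since every finite $p$-group is nilpotent, $\bS$ is a non-Abelian nilpotent group, and it sits inside $\bG$ as a subgroup. By the Quackenbush--Szab\'o theorem \cite{QS}, any finite group that contains a non-Abelian nilpotent subgroup is (inherently) non-dualizable; hence $\bG$ is not dualizable. Contraposing gives that a dualizable finite group can have only Abelian Sylow subgroups.

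For the converse, suppose instead that every Sylow subgroup of the finite group $\bG$ is Abelian. This is precisely the hypothesis of Nickodemus' theorem \cite{N}, which asserts that such a group is dualizable, and that result delivers the ``if'' direction directly.

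The only elementary point to record is that a non-Abelian Sylow subgroup is automatically a non-Abelian \emph{nilpotent} subgroup, so that the hypothesis of the Quackenbush--Szab\'o theorem is met; the rest of the forward implication is a verbatim appeal to the cited statement. The substance of the argument lies entirely in the converse, that is, in Nickodemus' theorem, whose proof requires exhibiting an explicit dualizing alter-ego (a discrete compatible relational structure) for a group all of whose Sylow subgroups are Abelian. I expect that construction, rather than the short forward implication, to be the genuine obstacle.
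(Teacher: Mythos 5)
Your proposal is correct and matches the paper exactly: the paper presents this theorem as an immediate assembly of the two cited results, with the ``only if'' direction coming from Quackenbush--Szab\'o (a non-Abelian Sylow subgroup is a non-Abelian nilpotent subgroup, so the group is inherently non-dualizable) and the ``if'' direction being precisely Nickodemus' theorem. No further argument is given or needed in the paper, so your reconstruction is essentially verbatim.
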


Bentz and Mayr prove in \cite{BM} that supernilpotent non-Abelian algebras are inherently non-dualizable. Bentz and Mayr also gave an example of a nilpotent, non-supernilpotent, non-Abelian, dualizable algebra.

A natural question, and the next step in the characterization of dualizable algebras in congruence-modular varieties, is the case of Abelian algebras. Several example are already known. Kearnes and Szendrei prove that finite modules are dualizable. A finite module, with an additional constant operation for each element, is dualizable (unpublished result by Bentz and Mayr)

In this paper we extend both results by proving that every finite Abelian algebra, generating a congruence-modular variety, is dualizable. This solves a problem asked by Bentz and Mayr \cite[Problem 6.1]{BM}. This also characterize supernilpotent dualizable algebras (exactly the Abelian algebras). Note that Kearnes and Szendrei have recently proved in \cite{KS} that Abelian algebras are dualizable. However the proof of this paper yields an explicit bound on the arities of relations in the dualizing structure.

Note that the author, with Bentz and Sequeira in \cite{BGS}, extend the result to prove that finite Abelian algebras are strongly dualizable.

\section{Basic concept}

We refer to \cite{CDBook} for basic definitions and results on duality theory.

Given a set $A$, we denote by $\card A$ the cardinality of $A$, and by $\Pow(A)$ the set of all subsets of $A$. Given a map $f\colon A\to B$ we set $\gra f=\setm{(a,f(a))}{a\in A}$. Note that if $f$ is an operation on $A$ of arity $n$, then $\gra f$ is a relation of arity $n+1$.

Let $\cF$ be a set of operations and relations over a set $A$. We say that $\cF$ \emph{entails} an operation $f$ (resp., a relation $R$), if for each integer $n>0$, each map $p\colon A^n\to A$ compatible with each element of $\cF$ is also compatible with $f$ (resp. $R$). Let $\cG$ be a set of operations and relations. We say that $\cF$ \emph{entails} $\cG$ if $\cF$ entails each relations and each operations in $\cG$.

In this paper we only use the following classical entailment results.
\begin{lemma}\label{L:entailementFaciles}
Let $A$ be a set. 
\begin{enumerate}
\item Let $(R_i)_{i\in I}$ be a family of $k$-ary relations on $A$. Then $\setm{R_i}{i\in I}$ entails $\bigcap_{i\in I}R_i$.
\item Let $\cF$ be a family of operations on $A$. Let $R$ be a $k$-ary relation on $A$ and let $p_1,\dots,p_k\colon A^n\to A$ be terms in $\cF$. We consider the $n$-ary relation on $A$ defined by 
\begin{equation*}
S=\setm{\vec x\in A^n}{ (p_1(\vec x),p_2(\vec x),\dots,p_k(\vec x))\in R}\,.
\end{equation*}
Then $\set{R}\cup\cF$ entails $S$.
\item Let $R$ be an $n$-ary relation. Set $S=\setm{(x_1,\dots,x_n,x_n)}{(x_1,\dots,x_n)\in R}$. Then $\set{S}$ entails $R$.
\item Let $f\colon A^n\to A$ then $\set{\gra f}$ entails $f$.
\end{enumerate}
\end{lemma}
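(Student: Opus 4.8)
The plan is to prove each of the four entailment facts in Lemma~\ref{L:entailementFaciles} directly from the definition of entailment. Recall that $\cF$ entails a relation $R$ means: for every $n>0$ and every map $p\colon A^n\to A$ compatible with all of $\cF$, the map $p$ is also compatible with $R$. So in each case I fix an arbitrary $n>0$ and an arbitrary operation $p\colon A^n\to A$ compatible with the hypothesized set, and show $p$ preserves the target relation. Throughout I will use the standard fact that $p$ is compatible with a $k$-ary relation $T$ precisely when $T$, viewed as a subset of $A^k$, is closed under the coordinatewise action of $p$: for any $n$ tuples $\vec t_1,\dots,\vec t_n\in T$, the tuple $p(\vec t_1,\dots,\vec t_n)$ (computed coordinatewise) again lies in $T$.

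For part~(1), I would take $p$ compatible with every $R_i$ and check it preserves $\bigcap_{i\in I}R_i$: given tuples in the intersection, they lie in each $R_i$, so their image under $p$ lies in each $R_i$ by compatibility, hence in the intersection. For part~(4), the graph $\gra f=\setm{(a,f(a))}{a\in A}$ is an $(n+1)$-ary relation, and I would show that any $m$-ary operation $p$ compatible with $\gra f$ must commute with $f$; this is exactly the standard translation of ``$p$ preserves the graph of $f$'' into ``$p$ and $f$ commute,'' so compatibility with $\gra f$ forces compatibility with $f$ itself.

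Parts~(2) and~(3) are the substantive ones. For part~(2), I fix $p\colon A^m\to A$ compatible with $R$ and with every operation in $\cF$, and I must show $p$ preserves $S=\setm{\vec x\in A^n}{(p_1(\vec x),\dots,p_k(\vec x))\in R}$. Take $m$ tuples $\vec x^{(1)},\dots,\vec x^{(m)}\in S$; I want $p(\vec x^{(1)},\dots,\vec x^{(m)})\in S$, i.e.\ that applying each $p_j$ to the coordinatewise image lands the result in $R$. The key computation is that, since each $p_j$ is a term in $\cF$ and $p$ is compatible with $\cF$, the operation $p$ commutes with $p_j$; therefore $p_j\bigl(p(\vec x^{(1)},\dots,\vec x^{(m)})\bigr)=p\bigl(p_j(\vec x^{(1)}),\dots,p_j(\vec x^{(m)})\bigr)$. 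Since each $\vec x^{(i)}\in S$ gives $(p_1(\vec x^{(i)}),\dots,p_k(\vec x^{(i)}))\in R$, applying $p$ coordinatewise to these $m$ elements of $R$ stays in $R$ by compatibility, and by the commutation identity this image is exactly $(p_1(\cdots),\dots,p_k(\cdots))$ evaluated at $p(\vec x^{(1)},\dots,\vec x^{(m)})$, giving membership in $S$. For part~(3), with $S=\setm{(x_1,\dots,x_n,x_n)}{(x_1,\dots,x_n)\in R}$, I take $p$ compatible with $S$ and recover compatibility with $R$ by simply dropping the (redundant) last coordinate: any family of tuples in $R$ extends to tuples in $S$ by duplicating the last entry, $p$ preserves $S$, and projecting off the final coordinate shows $p$ preserves $R$.

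The main obstacle is the commutation step in part~(2): I must be careful that ``$p$ is compatible with $\cF$'' propagates to ``$p$ commutes with every term $p_j$ built from $\cF$,'' which requires an induction on term complexity (compatibility with the basic operations of $\cF$ extends to compatibility with all composite terms). Once that lemma is in hand, the coordinatewise bookkeeping in~(2) is routine, and parts~(1),~(3),~(4) are immediate. Indeed, parts~(2) and~(3) can be seen as special cases of the general principle that relations defined from $R$ by term-substitution and by adding dependent coordinates are entailed, so I would phrase the argument to make that reusable.
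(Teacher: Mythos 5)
Your proof is correct. The paper itself gives no proof of Lemma~\ref{L:entailementFaciles} --- it is stated as a collection of classical entailment facts and used without argument --- and your direct verification from the definition of entailment (including the explicit acknowledgement that the commutation of $p$ with the terms $p_j$ in part~(2) requires an induction on term complexity) is exactly the standard argument one would supply.
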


We also use the classical brute-force and compactness argument summarized here.

\begin{theorem}\label{T:entaildualise}
Let~$\bA$ be a finite algebra. Let $\cF$ be a finite set of operations and relations compatible with~$\bA$. Assume that $\cF$ entails each relation compatible with~$\bA$. Then $\alg{A;\bF}$ dualizes~$\bA$.
\end{theorem}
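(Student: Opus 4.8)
The plan is to prove Theorem~\ref{T:entaildualise} by combining the entailment hypothesis with the classical duality-compactness machinery. The overarching strategy is as follows: since entailment says that any operation compatible with $\cF$ is automatically compatible with every relation of $\bA$, the homomorphisms with respect to $\bF=\cF$ will coincide, at least up to the closure operators involved, with the homomorphisms with respect to the full structure of \emph{all} relations compatible with $\bA$. The latter structure (the full ``brute-force'' alter-ego, equipped with every finitary relation compatible with $\bA$) is well known to dualize $\bA$, so the task reduces to showing that restricting to the finite subset $\cF$ does not change the dual.

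First I would recall the brute-force duality: let $\BA^{\mathrm{bf}}=\alg{A;\cR_\omega;\cT}$ be the alter-ego whose relations are \emph{all} finitary relations compatible with $\bA$. It is a standard fact (see \cite[Chapter~2]{CDBook}) that $\BA^{\mathrm{bf}}$ dualizes $\bA$; concretely, for every $\bB\in\QVar\bA$ the evaluation map $e_B\colon\bB\to\bB^{*+}$ is surjective, because any element $u\in\bB^{*+}$ is a map $\Hom(\bB,\bA)\to A$ preserving all relations compatible with $\bA$, and such a map is forced to be evaluation at some point of $\bB$. Thus the only thing to establish is that the dual computed with the impoverished structure $\alg{A;\bF}$ agrees with the dual computed with $\BA^{\mathrm{bf}}$.

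Here the entailment hypothesis does the work. The key step is to show that for every $\bB\in\QVar\bA$, the map $e_B$ computed relative to $\alg{A;\bF}$ is still surjective. Take $u\in B^{*+}$ relative to $\alg{A;\bF}$; that is, $u\colon\Hom(\bB,\bA)\to A$ is a continuous map preserving every operation in $\cF$. I claim $u$ in fact preserves every relation $R$ compatible with $\bA$. Indeed, fix such an $R$, of arity $k$, and fix elements $f_1,\dots,f_k\in\Hom(\bB,\bA)$ with $(f_1(x),\dots,f_k(x))\in R$ for all $x\in B$. Preservation of $R$ by $u$ means $(u(f_1),\dots,u(f_k))\in R$. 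Since $u$ preserves all operations in $\cF$, the restriction of $u$ to any finite set of homomorphisms can be realized as a term-built combination, and the hypothesis ``$\cF$ entails $R$'' says precisely that any map preserving $\cF$ preserves $R$. Applying this with the relevant projection-like map extracted from the tuple $(f_1,\dots,f_k)$, one obtains $(u(f_1),\dots,u(f_k))\in R$. Hence $u\in B^{*+}$ relative to $\BA^{\mathrm{bf}}$, and by brute-force duality $u=e_B(b)$ for some $b\in B$. This shows $e_B$ is surjective, and since it is always an embedding, it is an isomorphism.

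Finally, finiteness of $\cF$ is what allows the passage from dualizing (an isomorphism for each $\bB$) to the conclusion via compactness: with only finitely many operations and relations, the structure $\alg{A;\bF}$ is a genuine finite alter-ego of finite type, so the standard compactness argument of natural duality theory \cite[Chapter~2]{CDBook} applies without modification and yields that $\alg{A;\bF}$ dualizes $\bA$. The main obstacle I anticipate is the precise realization step: translating the abstract ``$u$ preserves $\cF$'' into the hypothesis of the entailment definition, which quantifies over maps $p\colon A^n\to A$ rather than over maps defined on subsets of $\Hom(\bB,\bA)$. The clean way around this is to observe that $u$ restricted to the finitely many relevant coordinates induces exactly such a map $p$ on a power of $A$, so that compatibility of $p$ with $\cF$ holds by construction and entailment delivers compatibility with $R$.
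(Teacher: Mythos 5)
The paper states Theorem~\ref{T:entaildualise} without proof, as a summary of the ``classical brute-force and compactness argument'', so the comparison is with that standard argument rather than with a written proof. Your skeleton --- brute force with all compatible relations, then entailment to cut down to $\cF$, then compactness --- is the right one, but the middle step, which is the only step with real content, has a genuine gap. The paper's definition of entailment quantifies over \emph{total} operations $p\colon A^n\to A$, whereas the map $u$ you need to control is defined only on $\Hom(\bB,\bA)$, a subset of $A^B$. Your proposed fix (``$u$ restricted to the finitely many relevant coordinates induces exactly such a map $p$ on a power of $A$'') does not produce a total operation: that restriction is a \emph{partial} map, defined only on the tuples $\{(f_1(b),\dots,f_k(b))\mid b\in B\}$, and there is no reason an $\cF$-preserving partial map should extend to an $\cF$-preserving total operation on $A^n$. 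This extension problem is precisely the known divide between clone-theoretic closure and duality-theoretic entailment (existential projection is a relational-clone construct but not an entailment construct), so the step cannot be waved through.

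The gap is not a repairable technicality: read literally with the $\mathrm{Pol}$--$\mathrm{Inv}$ definition, the theorem is false. Every finite group is finitely related (Aichinger--Mayr--McKenzie), so for the dihedral group of order $8$ there is a finite set $\cF$ of compatible relations such that every operation preserving $\cF$ is a term operation and hence preserves every compatible relation; yet that group is inherently non-dualizable by Quackenbush--Szab\'o, so $\alg{A;\cF}$ cannot dualize it. The statement must therefore be read with the duality-theoretic notion of entailment (preservation by every continuous $\cF$-preserving map on every dual structure $\bB^*$, equivalently on the test algebra of each relation), for which the constructs actually used in Lemma~\ref{L:entailementFaciles} are valid; under that reading your middle step becomes the definition, and the real content is (i) brute force at the finite level, via the single relation $\{(f_1(b),\dots,f_m(b))\mid b\in B\}$ where $\Hom(\bB,\bA)=\{f_1,\dots,f_m\}$, and (ii) the Duality Compactness Theorem of Willard and Z\'adori. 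Relatedly, you have the role of compactness reversed: the brute-force argument you sketch gives surjectivity of $e_B$ only for \emph{finite} $\bB\in\QVar\bA$ (for infinite $\bB$ the relation you need has infinite arity), and the Duality Compactness Theorem is exactly what upgrades ``dualizes every finite member'' to ``dualizes'', using the finiteness of $\cF$; it is not a step performed after one already has an isomorphism for every $\bB$.
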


We only consider Abelian algebras generating congruence-modular varieties.

\begin{definition}
A term of an algebra~$\bA$ is \emph{affine} if there is an Abelian group $\alg{A;+,-,0}$. Such that
\begin{enumerate}
\item $t(x,y,z)=x-y+z$ for all $x,y,z\in A$.
\item $t$ is compatible with all terms of~$\bA$. That is, $f(\vec x-\vec y+\vec z)=f(\vec x)-f(\vec y)+f(\vec z)$ for all $n$-ary terms $f$ of~$\bA$ and all $\vec x,\vec y,\vec z$ in $A^n$.
\end{enumerate}
An algebra is \emph{affine} if it has an affine term.
\end{definition}

\begin{remark}\label{R:AlgebreAffine}
The group operations $+$, $-$ and the constant $0$ are not in general terms of~$\bA$. These operations might not be compatible with operations of~$\bA$, and might not be compatible with morphisms.

The Abelian group yielding an affine structure is not unique. If $c$ is any element of $A$ then we can assume that $c$ is the neutral element of $+$. In fact $(x,y)\mapsto x+^c y= t(x,c,y)$ is an Abelian group operation (cf. \cite[Lemma~5.6]{FMBook}). However the term map, induced by~$t$, is unique.
\end{remark}

\begin{lemma}\label{L:TermesAffines}
Let $t(x,y,z)$ be an affine term. Let $u_1,\dots,u_n$ be integers such that $\sum_{k=1}^n u_k=1$, then $\sum_{k=1}^n u_kx_k$ is a term in $t$. Conversely any term in $t$ is equivalent to a term of this form.
\end{lemma}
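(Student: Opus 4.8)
The plan is to read ``term in $t$'' as the term operation it induces, so that the statement becomes an assertion about which functions $A^n\to A$ arise. Since $t(a,b,c)=a-b+c$ for the underlying group, a term in $t$ in the variables $x_1,\dots,x_n$ is, by induction on its construction, a function of the form $\vec x\mapsto\sum_{k=1}^n u_k x_k$ with $u_k\in\mathbb Z$; I will track such a function by its coefficient vector $(u_1,\dots,u_n)\in\mathbb Z^n$. The key observation is that composing with $t$ acts on coefficient vectors by the Mal'cev-type rule $(\vec a,\vec b,\vec c)\mapsto\vec a-\vec b+\vec c$: indeed $t(s_1,s_2,s_3)(\vec x)=s_1(\vec x)-s_2(\vec x)+s_3(\vec x)$. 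Both directions then reduce to understanding the least subset $S\subseteq\mathbb Z^n$ that contains the standard basis vectors $e_1,\dots,e_n$ (the coefficient vectors of the projections $x_k$) and is closed under this rule.

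For the converse direction I would argue by induction on the structure of a term in $t$. The base case is a projection $x_k$, whose coefficient vector is $e_k$, of coordinate sum $1$. For the inductive step, if $s_1,s_2,s_3$ have coefficient sums all equal to $1$, then $t(s_1,s_2,s_3)$ has coefficient sum $1-1+1=1$. Hence every term in $t$ equals some $\sum_k u_k x_k$ with $\sum_k u_k=1$, which is the asserted normal form; equivalently $S$ is contained in the set of coordinate-sum-$1$ vectors.

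For the forward direction I must show the reverse inclusion, that every vector of coordinate sum $1$ lies in $S$. First I would note that the operation $(\vec a,\vec b,\vec c)\mapsto\vec a-\vec b+\vec c$ commutes with translation, so for any $p\in S$ the translate $S-p$ is again closed under it; taking $p=e_1$ and checking that a translated, closed set containing $0$ is closed under $\vec a\mapsto-\vec a$ (apply the rule to $(0,\vec a,0)$) and under addition (apply it to $(\vec a,0,\vec c)$), I conclude that $S-e_1$ is a subgroup of $\mathbb Z^n$. This subgroup contains each $e_k-e_1$, hence contains the sublattice they generate; and since any $\vec v$ with $\sum_k v_k=0$ equals $\sum_{k=2}^n v_k(e_k-e_1)$, that sublattice is exactly $\setm{\vec v}{\sum_k v_k=0}$. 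Therefore $S=e_1+\setm{\vec v}{\sum_k v_k=0}=\setm{\vec u}{\sum_k u_k=1}$, which gives the claim.

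The routine points are the two inductions and the translation bookkeeping; the only genuine content is the lattice computation showing that the differences $e_k-e_1$ generate the whole coordinate-sum-zero sublattice, which is where I expect to spend the most care (in particular, ensuring the closure is taken inside a fixed $\mathbb Z^n$ by padding shorter terms with dummy variables of coefficient $0$). Everything else is a direct translation between terms in $t$ and their coefficient vectors, using only that $t(a,b,c)=a-b+c$.
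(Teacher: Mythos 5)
Your proof is correct. The paper actually gives no proof of this lemma at all --- it is quoted as a standard fact about the clone generated by an affine Mal'cev operation (in the spirit of \cite[Chapter 5]{FMBook}) --- so there is nothing to diverge from; your argument (coefficient vectors, the easy induction showing the coefficient sum is preserved, and the translation of the reachable set $S$ by $e_1$ to exhibit it as the coset $e_1+\setm{\vec v}{\sum_k v_k=0}$ of the subgroup generated by the differences $e_k-e_1$) is exactly the standard one and all the steps check out, including the identity $\vec v=\sum_{k=2}^n v_k(e_k-e_1)$ for sum-zero vectors. The only caveat worth recording is the one you already flag: coefficient vectors need not be unique as representations of term operations (they are only determined modulo the exponent of the group, and possibly less), but your argument never uses uniqueness --- it only needs that every formal term in $t$ admits \emph{some} representation with coefficient sum $1$ and that every sum-$1$ vector is realized by some formal term.
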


The following characterization of Abelian algebras in congruence-modular varieties is due to Herrmann \cite{AA}, a complete proof is given in \cite[Corollary~5.9]{FMBook}.
\begin{theorem}[Herrmann]
The Abelian algebras generating congruence-modular varieties are the affine algebras.
\end{theorem}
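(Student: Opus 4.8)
The statement is an equivalence, so the plan is to prove the two implications separately, the forward one (affine $\Rightarrow$ Abelian and congruence-modular) being routine and the converse carrying the substance. For the easy direction, let $\bA$ be affine with affine term $t(x,y,z)=x-y+z$. Since $t(x,x,y)=y$ and $t(x,y,y)=x$, the term $t$ is a Mal'cev term, so the variety generated by $\bA$ is congruence-permutable, hence congruence-modular. For Abelianness I would verify the term condition directly. Given any term $f$, I split its variables into two blocks and use condition~(2) of affineness (so $f$ commutes coordinatewise with $t$) to observe the identity $f(\vec b,\vec c)=t\bigl(f(\vec b,\vec d),f(\vec a,\vec d),f(\vec a,\vec c)\bigr)$, valid for all $\vec a,\vec b,\vec c,\vec d$ because $t(\vec b,\vec a,\vec a)=\vec b$ and $t(\vec d,\vec d,\vec c)=\vec c$. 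If $f(\vec a,\vec c)=f(\vec a,\vec d)$, the right-hand side collapses to $t\bigl(f(\vec b,\vec d),f(\vec a,\vec d),f(\vec a,\vec d)\bigr)=f(\vec b,\vec d)$, which is exactly the term condition $[1,1]=0$.

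For the hard direction, let $\bA$ be Abelian in a congruence-modular variety. First I would invoke Gumm's theorem that such a variety has a difference term $d(x,y,z)$, satisfying $d(x,x,y)=y$ identically and $d(x,y,y)\equiv x\pmod{[\theta,\theta]}$ whenever $(x,y)\in\theta$. Taking $\theta$ to be the full congruence and using $[1,1]=0$ upgrades $d$ to a genuine Mal'cev term: $d(x,x,y)=y$ and $d(x,y,y)=x$ for all $x,y$. I would then fix an element $0\in A$ and set $x+y:=d(x,0,y)$ and $-x:=d(0,x,0)$, the aim being to show that $\alg{A;+,-,0}$ is an Abelian group with respect to which $d(x,y,z)=x-y+z$ and every basic operation of $\bA$ is affine; the term $t:=d$ would then witness affineness.

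The main obstacle is to extract, from the single hypothesis $[1,1]=0$ (the term condition), enough algebraic identities to build this group. Specializing the term condition to the term $d$ itself yields a generalized-associativity law such as $d\bigl(b,p,d(p,q,r)\bigr)=d(b,q,r)$, from which the neutral, inverse, associativity and commutativity laws follow by suitable substitutions together with the Mal'cev identities. The remaining point, that every basic operation $f$ commutes with $d$ — equivalently that $d\colon\bA^3\to\bA$ is a homomorphism — is again a reformulation of $[1,1]=0$: the two sides of $f\bigl(d(\vec a,\vec b,\vec c)\bigr)=d\bigl(f(\vec a),f(\vec b),f(\vec c)\bigr)$ agree on the diagonal $\vec b=\vec c$, and the term condition propagates the equality to all arguments. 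This commutation is precisely condition~(2), completing the proof that $\bA$ is affine. I expect essentially all the work of Herrmann's theorem to reside in these term-condition manipulations, which is why I would organize them around the self-commutation of the Mal'cev term $d$.
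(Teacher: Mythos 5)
The paper does not prove this statement: it is quoted as a known theorem of Herrmann, with \cite{AA} and \cite[Corollary~5.9]{FMBook} cited for a complete proof, so there is no in-paper argument to compare against. Your outline is a faithful reconstruction of that standard proof (Gumm's difference term, upgraded to a Mal'cev term by $[1,1]=0$, then the affine group structure extracted from the term condition), and the easy direction is carried out correctly and in full. The one place where your sketch is thinner than it looks is the phrase ``the term condition propagates the equality to all arguments'': the term condition transfers an equality $s(\vec a,\vec c)=s(\vec a,\vec d)\Rightarrow s(\vec b,\vec c)=s(\vec b,\vec d)$ for a \emph{single} term $s$, whereas you need to conclude that two \emph{different} terms, $f(d(\vec a,\vec b,\vec c))$ and $d(f(\vec a),f(\vec b),f(\vec c))$, coincide because they agree on the diagonal $\vec b=\vec c$; this requires the standard auxiliary-term trick (applying the term condition to a term built from $d$, $f$, and their composite, as in \cite[Proposition~5.7]{FMBook}) rather than a bare application of the condition. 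The same remark applies to the derivation of the group laws from $d(b,p,d(p,q,r))=d(b,q,r)$ (that identity itself does follow from the term condition by choosing the first block to contain $b$ and specializing $b:=p$). These are exactly the computations you correctly predict constitute the substance of Herrmann's theorem; with them supplied, your plan is the proof the paper is citing.
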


In the sequel of the paper, whenever we have an congruence-modular variety of Abelian algebras, we fix a term~$t$ giving each algebra an affine structure. We also implicitly fix an Abelian group operation $+$ for each considered Abelian algebra (note that we only consider Abelian algebras in congruence-modular varieties).

\section{Congruences induced by subalgebras}

In this section, we construct a correspondence between subalgebras of an Abelian algebra, and some of its congruences.

Most results are generalizations of the corresponding result for Abelian groups. As there is no natural origin/zero element in an Abelian algebra the formulations and proofs are a little different.

We denote by $\Sub\bA$ the set of all underlying set of (nonempty) subalgebras of~$\bA$. Given $B\in\Sub\bA$, we denote by~$\bB$ the corresponding subalgebra of~$\bA$.

\begin{definition}
Let~$\bA$ be an Abelian algebra. Let $B\in\Sub\bA$. The \emph{congruence generated} by $B$, denoted by $\Theta_B$, is the smallest congruence of~$\bA$ which identifies all pairs of elements in $B$.
\end{definition}

\begin{remark}
Some congruences of~$\bA$ might not be of the form $\Theta_B$ with $B\in\Sub\bA$. For example, if~$\bA$ has at least two different constant operations, then the identity congruence of~$\bA$ cannot be written as $\Theta_B$.

We might have $\Theta_B=\Theta_C$ with $B\not=C$ in $\Sub\bA$. For example, if the only operation of~$\bA$ is~$t$, and~$\bA$ has a nontrivial subalgebra $B$, then taking $c\in A\setminus B$, we can see that $C=c+B=\setm{c+b}{b\in B}\in\Sub\bA$ and $B\not=C$, however $\Theta_B=\Theta_C$.
\end{remark}

In the following lemma, we know that there always exists such a term~$t$. The equation \eqref{E:congruenceforall} and \eqref{E:congruenceexists} could be taken for definition of $\Theta_B$. In particular, we see that it does not depend on the choice of $t$.

\begin{lemma}\label{L:congruencefromsubspace}
Let~$\bA$ be an Abelian algebra. Let $t$ be an affine term of~$\bA$. Let $B\in\Sub\bA$. The following equalities hold
\begin{align}
\Theta_B&=\setm{(x,y)\in A}{\forall b\in B\,,t(x,y,b)\in B}\label{E:congruenceforall}\\
&=\setm{(x,y)\in A}{\exists b\in B\,,t(x,y,b)\in B}\label{E:congruenceexists}\,.
\end{align}
\end{lemma}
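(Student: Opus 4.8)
The plan is to prove the two equalities in Lemma~\ref{L:congruencefromsubspace} by establishing a chain of set inclusions. Denote the right-hand sides of \eqref{E:congruenceforall} and \eqref{E:congruenceexists} by $\Phi$ and $\Psi$ respectively. Since the condition ``$\forall b\in B$'' is stronger than ``$\exists b\in B$'' (here $B$ is nonempty, being the underlying set of a subalgebra), we immediately get $\Phi\subseteq\Psi$. So it suffices to prove the circular chain $\Theta_B\subseteq\Phi$ and $\Psi\subseteq\Theta_B$; combined with $\Phi\subseteq\Psi$ this forces all three sets to coincide.

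First I would show $\Psi\subseteq\Theta_B$. Take $(x,y)$ with $t(x,y,b)\in B$ for some $b\in B$. The key observation is that $\Theta_B$ identifies all pairs of elements of $B$ by definition, so in the quotient $\bA/\Theta_B$ the whole set $B$ collapses to a single class. Writing $x+^b y = t(x,b,y)$ as in Remark~\ref{R:AlgebreAffine}, the element $b$ becomes the neutral element of the induced group operation modulo $\Theta_B$, and I would argue that $t(x,y,b)\equiv$ (some fixed element of $B$) modulo $\Theta_B$ combined with $t(x,y,b)\in B$ forces $x\mathrel{\Theta_B}y$. Concretely, since $t$ is compatible with $\Theta_B$ and $b,\,t(x,y,b)$ both lie in the single $\Theta_B$-class of $B$, we have $t(x,y,b)\mathrel{\Theta_B}b$; applying the term identity $t(x,y,b)=x-y+b$ (Lemma~\ref{L:TermesAffines}) in the quotient Abelian group and cancelling $b$ yields $x\mathrel{\Theta_B}y$.

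Next I would show $\Theta_B\subseteq\Phi$ by verifying that $\Phi$ is itself a congruence of $\bA$ that identifies all pairs in $B$; minimality of $\Theta_B$ then gives the inclusion. That $\Phi$ identifies pairs in $B$ is clear: for $x,y,b\in B$ we have $t(x,y,b)\in B$ since $B$ is a subalgebra and $t$ is a term. Reflexivity, symmetry, and transitivity of $\Phi$ follow from the group-identity form $t(x,y,b)=x-y+b$ together with the fact that $B$ is closed under the group operations relevant here (using that $B$ is a subalgebra, hence closed under $t$, so closed under the affine combinations of Lemma~\ref{L:TermesAffines}). The compatibility of $\Phi$ with every operation $f$ of $\bA$ is where the Abelian hypothesis enters decisively: I would use condition~(2) of the affineness definition, namely $f(\vec x-\vec y+\vec z)=f(\vec x)-f(\vec y)+f(\vec z)$, to show that if each coordinate pair $(x_i,y_i)$ lies in $\Phi$ then $(f(\vec x),f(\vec y))$ does too.

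The main obstacle I anticipate is handling the fact that there is no canonical zero and that the group structure is only available ``up to'' the chosen affine term, so all manipulations must be phrased through $t$ rather than through $+$ and $-$ directly. In particular, checking that $\Phi$ is transitive and compatible requires carefully choosing witnesses $b\in B$ and exploiting that $B$ is closed under the affine operations; the temptation to write $x-y+b$ and cancel freely must be justified by Lemma~\ref{L:TermesAffines} and the compatibility of $t$ with the congruence, rather than assumed. Once the congruence axioms for $\Phi$ are verified, the minimality argument and the inclusion $\Psi\subseteq\Theta_B$ close the loop and the two displayed equalities follow.
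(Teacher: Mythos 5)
Your proposal is correct and follows essentially the same route as the paper: you show the ``for all'' set $\Phi$ is a congruence collapsing $B$ (using affineness for compatibility with the operations) to get $\Theta_B\subseteq\Phi$, and you obtain $\Psi\subseteq\Theta_B$ from exactly the paper's computation $t(t(x,y,b),b,y)=x$ versus $t(b,b,y)=y$, with the group cancellation justified through $t$ as you note. The only organizational difference is that you close a circular chain $\Phi\subseteq\Psi\subseteq\Theta_B\subseteq\Phi$ rather than proving the exists/forall equivalence up front; just be aware that the ``exists implies forall'' computation $t(t(x,y,b'),b',b)=t(x,y,b)$ (which the paper does first) still has to appear inside your verification that $\Phi$ is compatible with an arbitrary operation $f$, since the witness produced there is $f(b,\dots,b)$ rather than an arbitrary element of $B$.
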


\begin{proof}
We fix an Abelian group structure on $A$ such that $t(x,y,z)=x-y+z$, for all $x,y,z\in A$. We set:
\begin{eqnarray}
\Theta'_B=\setm{(x,y)\in A}{\forall b\in B\,,t(x,y,b)\in B}\,,\\
\Theta''_B=\setm{(x,y)\in A}{\exists b\in B\,,t(x,y,b)\in B}\,.
\end{eqnarray}

Let $(x,y)\in\Theta''_B$. Let $b'\in B$ such that $t(x,y,b')\in B$. Let $b\in B$. As $B$ is stable under~$t$, it follows that $t(t(x,y,b'),b',b)\in B$, however:
\begin{equation*}
t(t(x,y,b'),b',b)=x-y+b'-b'+b=x-y+b=t(x,y,b)\,.
\end{equation*}
So $t(x,y,b)\in B$ for all $b\in B$, that is, $(x,y)\in \Theta'_B$. Hence $\Theta''_B\subseteq\Theta'_B$. The other containment follows from the fact that $B$ is not empty.

We now prove that $\Theta'_B$ is a congruence of~$\bA$. Let $x\in A$, let $b\in B$, then $t(x,x,b)=b\in B$, therefore $(x,x)\in\Theta'_B$.

Let $(x,y)\in\Theta'_B$. Let $b\in B$. Set $b'=t(x,y,b)$. Then $b'\in B$, moreover the following equalities hold
\begin{equation}
t(y,x,b')=t(y,x,t(x,y,b))=y-x+x-y+b=b.
\end{equation}
As $b\in B$, it follows that $(y,x)\in \Theta'_B$.

Let $x,y,z$ in $A$ such that $(x,y)\in\Theta'_B$ and $(y,z)\in\Theta'_B$. Let $b\in B$, then $t(x,y,b)\in B$ and  $t(y,z,b)\in B$. Therefore $t(t(x,y,b),b,t(y,z,b))\in B$. Moreover the following equalities hold:
\begin{equation}
t(t(x,y,b),b,t(y,z,b))=x-y+b-b+y-z+b = x-z+b=t(x,z,b)\,.
\end{equation}
Therefore $t(x,z,b)\in B$, it follows that $(x,z)\in\Theta'_B$.

Let $\lambda$ be an $n$-ary operation of~$\bA$. Let $x_1,\dots,x_n,y_1,\dots, y_n$ in $A$ such that $(x_i,y_i)\in\Theta'_B$ for all $1\le i\le n$. Let $b\in B$, we have
\begin{equation}
t(x_i,y_i,b)\in B\,,\quad\text{for all $1\le i\le n$.}
\end{equation}
However~$t$ preserves $\lambda$ and $B$ is stable under $\lambda$, hence:
\begin{equation}
t(\lambda(x_1,\dots,x_n),\lambda(y_1,\dots,x_n),\lambda(b,\dots,b))\in B\,.
\end{equation}
Moreover $\lambda(b,\dots,b)\in B$, therefore $(\lambda(x_1,\dots,x_n),\lambda(y_1,\dots,x_n))\in\Theta'_B$.

Hence $\Theta'_B$ is a congruence of~$\bA$. Moreover, given $x,y\in B$, note that $t(x,y,y)=x\in B$, hence $(x,y)\in \Theta'_B$. Therefore $\Theta_B\subseteq\Theta'_B$.

Let $(x,y)\in\Theta'_B$. Let $b\in B$, hence $t(x,y,b)\in B$. Therefore $b$ and $t(x,y,b)$ are identified by $\Theta_B$, hence $t(t(x,y,b),b,y)$ and $t(b,b,y)$ are identified by $\Theta_B$. However $t(t(x,y,b),b,y)=x-y+b-b+y=x$ and $t(b,b,y)=b-b+y=y$, thus $(x,y)\in\Theta_B$. Therefore $\Theta_B=\Theta'_B$.
\end{proof}

Given a congruence $\alpha$ of~$\bA$, there might not exist a corresponding space $B$ such that $\alpha=\Theta_B$, however if $\alpha\supseteq\Theta_B$, we can assign a set $C(\alpha,B)=\setm{x\in A}{\forall b\in B\,,(x,b)\in\alpha}$.

\begin{lemma}\label{L:subspacefromcongruence}
Let~$\bA$ be an Abelian algebra. Let $B\in\Sub\bA$. Let $\alpha$ be a congruence of~$\bA$ containing $\Theta_B$. Then $B\subseteq C(\alpha,B)\in\Sub\bA$, and:
\begin{equation}
C(\alpha,B)=\setm{x\in A}{\exists b'\in B\,,(x,b')\in\alpha}\,.
\end{equation}
\end{lemma}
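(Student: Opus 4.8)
The plan is to first establish the displayed equality between the two descriptions of $C(\alpha,B)$, and then to exploit the more convenient \emph{existential} form to verify both that $B\subseteq C(\alpha,B)$ and that $C(\alpha,B)$ is a subuniverse. Only the hypothesis $\Theta_B\subseteq\alpha$ together with the congruence and subalgebra axioms will be needed; the explicit group laws for $t$ play no role here.

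First I would prove the equality. Write $C'=\setm{x\in A}{\exists b'\in B\,,(x,b')\in\alpha}$ for the right-hand side. The inclusion $C(\alpha,B)\subseteq C'$ is immediate because $B\ne\es$, so any $b\in B$ serves as a witness. For the reverse inclusion, suppose $(x,b')\in\alpha$ for some $b'\in B$, and let $b\in B$ be arbitrary; since $b',b\in B$ we have $(b',b)\in\Theta_B\subseteq\alpha$, whence transitivity of $\alpha$ gives $(x,b)\in\alpha$. As $b$ was arbitrary, $x\in C(\alpha,B)$, so $C'\subseteq C(\alpha,B)$. Next, $B\subseteq C(\alpha,B)$ follows at once from the existential description: for $x\in B$ the element $x$ itself witnesses $(x,x)\in\alpha$ by reflexivity. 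In particular $C(\alpha,B)$ is nonempty.

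Finally I would check closure under the operations of~$\bA$. Let $\lambda$ be an $n$-ary operation and let $x_1,\dots,x_n\in C(\alpha,B)$. Using the existential form, choose $b_i'\in B$ with $(x_i,b_i')\in\alpha$ for each $i$. Since $\alpha$ is a congruence, $(\lambda(x_1,\dots,x_n),\lambda(b_1',\dots,b_n'))\in\alpha$, and $\lambda(b_1',\dots,b_n')\in B$ because $B$ is a subuniverse. Hence $\lambda(x_1,\dots,x_n)\in C(\alpha,B)$, so $C(\alpha,B)\in\Sub\bA$. The whole argument is routine, and the only organizational point — which I regard as the one genuinely load-bearing step — is deriving the existential characterization first: with the universal definition the closure computation would require an extra transitivity argument, whereas the existential form makes it a one-line consequence of $\alpha$ being a congruence and $B$ a subalgebra.
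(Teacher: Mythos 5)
Your proof is correct and follows essentially the same route as the paper: establish the existential characterization from $\Theta_B\subseteq\alpha$ and transitivity, get $B\subseteq C(\alpha,B)$ from reflexivity, and then verify closure under operations using the congruence property of $\alpha$ and the fact that $B$ is a subuniverse. The only (immaterial) difference is that you pick separate witnesses $b_i'$ for each $x_i$ in the closure step, whereas the paper works directly with a single common $b\in B$, which is equally short.
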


\begin{proof}
Let $x\in A$ and $b'\in B$ such that $(x,b')\in\alpha$. Let $b\in B$. Note that $(b',b)\in\Theta_B\subseteq\alpha$. Hence $(x,b)\in\alpha$ for all $b\in B$, that is, $x\in C(\alpha,B)$. The other containment is immediate as $B$ is not empty.

Let $b\in B$. As $(b,b)\in\alpha$, it follows that $b\in C(\alpha,B)$, therefore $B\subseteq C(\alpha,B)$.

Let $\lambda$ be an $n$-ary operation of~$\bA$. Let $x_1,\dots,x_n$ in $C(\alpha,B)$. Let $b\in B$. Note that $(x_i,b)\in \alpha$ for all $1\le i\le n$, therefore $(\lambda(x_1,\dots,x_n),\lambda(b,\dots,b))\in\alpha$. However $\lambda(b,\dots,b)\in B$, hence $\lambda(x_1,\dots,x_n)\in C(\alpha,B)$.
\end{proof}

\begin{lemma}\label{L:bijcongruencesubspace}
Let~$\bA$ be an Abelian algebra. Let $B\in\Sub\bA$. The following statements hold:
\begin{enumerate}
\item Let $\alpha\supseteq\Theta_B$ be a congruence of~$\bA$. Then $\Theta_{C(\alpha,B)}=\alpha$.

\item Let $X\supseteq B$ in $\Sub\bA$. Then $C(\Theta_X,B)=X$
\end{enumerate}
\end{lemma}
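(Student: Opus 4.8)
The two statements together assert that $X\mapsto\Theta_X$ and $\alpha\mapsto C(\alpha,B)$ are mutually inverse bijections between the subalgebras $X\in\Sub\bA$ with $X\supseteq B$ and the congruences $\alpha$ of~$\bA$ with $\alpha\supseteq\Theta_B$. My plan is therefore to prove each part by a pair of inclusions, leaning on the two descriptions of $\Theta_{(-)}$ supplied by Lemma~\ref{L:congruencefromsubspace}, on the description of $C(\alpha,B)$ from Lemma~\ref{L:subspacefromcongruence}, and on the fact that~$t$ is an honest term of~$\bA$ and hence preserves every congruence.

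For part~(2), I fix $X\supseteq B$ in $\Sub\bA$ and write $C=C(\Theta_X,B)$. The inclusion $X\subseteq C$ is immediate: if $x\in X$ then for every $b\in B\subseteq X$ the pair $(x,b)$ lies in $\Theta_X$, since $\Theta_X$ identifies all pairs of~$X$, so $x\in C$. For the reverse inclusion I would use the description $C=\setm{x}{\exists b'\in B,\ (x,b')\in\Theta_X}$ from Lemma~\ref{L:subspacefromcongruence}: given such $x$ and $b'$, the ``$\forall$'' form \eqref{E:congruenceforall} of $\Theta_X$, applied with the witness $b'\in X$, gives $t(x,b',b')\in X$, and $t(x,b',b')=x$, whence $x\in X$.

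For part~(1), I fix a congruence $\alpha\supseteq\Theta_B$ and set $C=C(\alpha,B)$, which is a subalgebra containing~$B$ by Lemma~\ref{L:subspacefromcongruence}, so that $\Theta_C$ is defined. To get $\Theta_C\subseteq\alpha$ it suffices, by minimality of $\Theta_C$, to check that $\alpha$ identifies every pair of elements of~$C$; but any $x,y\in C=C(\alpha,B)$ satisfy $(x,b),(y,b)\in\alpha$ for every $b\in B$ by definition of $C(\alpha,B)$, and since $B\neq\es$, fixing such a~$b$ yields $(x,y)\in\alpha$ by symmetry and transitivity. For the converse $\alpha\subseteq\Theta_C$, I take $(x,y)\in\alpha$ and any $b\in B\subseteq C$. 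Because~$t$ is a term of~$\bA$ it preserves~$\alpha$, so from $(x,y)\in\alpha$ we get that $t(x,y,b)$ is $\alpha$-related to $t(y,y,b)=b\in B$; thus $t(x,y,b)$ lies in an $\alpha$-class meeting~$B$, i.e.\ $t(x,y,b)\in C$. Taking $c=b\in C$ as a witness in the ``$\exists$'' description \eqref{E:congruenceexists} of $\Theta_C$ then gives $(x,y)\in\Theta_C$.

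The routine half is part~(2) together with the inclusion $\Theta_C\subseteq\alpha$; the step carrying the real content is $\alpha\subseteq\Theta_C$. The key point there, and the main thing to get right, is that~$t$ is a genuine term operation of~$\bA$ and therefore respects the congruence~$\alpha$: this is precisely what converts the algebraic identity $t(y,y,b)=b$ into the membership $t(x,y,b)\in C$ and lets the ``$\exists$'' characterization of $\Theta_C$ close the argument.
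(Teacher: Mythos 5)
Your proof is correct and follows essentially the same route as the paper: both arguments reduce everything to the two characterizations of $\Theta_{(-)}$ from Lemma~\ref{L:congruencefromsubspace}, the description of $C(\alpha,B)$ from Lemma~\ref{L:subspacefromcongruence}, and the key observation that compatibility of $\alpha$ with $t$ turns $(x,y)\in\alpha$ into $(t(x,y,b),b)\in\alpha$. The only cosmetic difference is that you obtain $\Theta_{C(\alpha,B)}\subseteq\alpha$ directly from the minimality of the generated congruence, where the paper instead proves the reverse implication of that same equivalence and chains everything into a single string of biconditionals.
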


\begin{proof}
We fix a group operation $+$ on $A$, such that~$t$, defined by $t(x,y,z)=x-y+z$ is a term of~$\bA$.

Let $x,y$ in $A$. Let $b\in B$. Assume that $(x,y)\in\alpha$, note that $(y,y)\in\alpha$ and $(b,b)\in\alpha$, hence, form the compatibility of $\alpha$ with~$t$, it follows that $(t(x,y,b),b)=(t(x,y,b),t(y,y,b))\in\alpha$. Conversely if $(t(x,y,b),b)\in\alpha$, then
\begin{equation*}
(x,y)=(t(t(x,y,b),b,y) , t(b,b,y))\in\alpha\,.
\end{equation*}
This proves the following equivalence:
\begin{equation}\label{E:L:bij1}
(x,y)\in\alpha\Longleftrightarrow (t(x,y,b),b)\in\alpha\,.
\end{equation}

The following equivalences hold:
\begin{align*}
(x,y)\in\Theta_{C(\alpha,B)} &\Longleftrightarrow t(x,y,b)\in C(\alpha,B) && \text{By Lemma~\ref{L:congruencefromsubspace}.}\\
&\Longleftrightarrow (t(x,y,b),b)\in\alpha &&\text{By Lemma~\ref{L:subspacefromcongruence}.}\\
&\Longleftrightarrow (x,y)\in\alpha &&\text{By \eqref{E:L:bij1}.}
\end{align*}
So $\Theta_{C(\alpha,B)}=\alpha$, that is, $(1)$ holds.

Fix $X\in\Sub\bA$ such that $X\supseteq B$. Let $x$ in $A$, let $b\in B$.
\begin{equation*}
x\in C(\Theta_X,B) \Longleftrightarrow (x,b)\in\Theta_X \Longleftrightarrow x=t(x,b,b)\in X\,.
\end{equation*}
That is, $(2)$ holds.
\end{proof}

\begin{theorem}\label{T:bijcongruencesubspace}
Let~$\bA$ be an Abelian algebra. Let $B\in\Sub\bA$. Then $X\mapsto\Theta_X$ and $\alpha\mapsto C(\alpha,B)$ are mutually inverse isomorphisms of lattices between subalgebras of~$\bA$ containing $B$ and congruences of~$\bA$ containing $\Theta_B$.
\end{theorem}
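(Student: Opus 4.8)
The plan is to assemble Theorem~\ref{T:bijcongruencesubspace} directly from the two preceding lemmas, treating it as a packaging statement: the bulk of the mathematical work has already been done, and what remains is to verify that the two maps are well-defined on the asserted domains, mutually inverse, and order-preserving in both directions. First I would fix the two maps explicitly: $\Phi\colon X\mapsto \Theta_X$ sending a subalgebra $X\supseteq B$ to a congruence, and $\Psi\colon\alpha\mapsto C(\alpha,B)$ sending a congruence $\alpha\supseteq\Theta_B$ to a subalgebra. I need to check that each lands in the correct target. For $\Phi$: if $X\supseteq B$ then $\Theta_X\supseteq\Theta_B$, since $\Theta_B$ is the smallest congruence identifying all pairs in $B$ and $\Theta_X$ identifies all such pairs as well (monotonicity of $\Theta$, immediate from the definition). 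For $\Psi$: Lemma~\ref{L:subspacefromcongruence} already gives $B\subseteq C(\alpha,B)\in\Sub\bA$, so the image is a subalgebra containing $B$, as required.

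The bijectivity is then exactly the content of Lemma~\ref{L:bijcongruencesubspace}: part~(1) states $\Theta_{C(\alpha,B)}=\alpha$, i.e. $\Phi\circ\Psi=\id$ on congruences containing $\Theta_B$, and part~(2) states $C(\Theta_X,B)=X$, i.e. $\Psi\circ\Phi=\id$ on subalgebras containing $B$. So $\Phi$ and $\Psi$ are mutually inverse bijections, and I would simply invoke Lemma~\ref{L:bijcongruencesubspace} for both identities. What remains for the lattice-isomorphism claim is that the bijection respects order. Here I would prove that $\Phi$ is monotone (a bijection between posets whose inverse is also monotone is automatically a lattice isomorphism, since meets and joins are determined by the order). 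Monotonicity of $\Phi$ is the same $X\subseteq Y\Rightarrow\Theta_X\subseteq\Theta_Y$ observation used above for well-definedness. For $\Psi$, I would check directly from the description $C(\alpha,B)=\setm{x\in A}{\forall b\in B,(x,b)\in\alpha}$ that $\alpha\subseteq\beta$ implies $C(\alpha,B)\subseteq C(\beta,B)$, which is immediate since the defining condition only becomes easier to satisfy as the congruence grows.

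Concretely, the order-isomorphism can be packaged as an equivalence: for subalgebras $X\supseteq B$ and a congruence $\alpha\supseteq\Theta_B$, I claim
\begin{equation*}
\Theta_X\subseteq\alpha\Longleftrightarrow X\subseteq C(\alpha,B)\,.
\end{equation*}
This is a Galois-style adjunction statement, and once established it yields at a stroke that both $\Phi$ and $\Psi$ are monotone and mutually inverse, hence a lattice isomorphism. The forward direction follows because $X\subseteq C(\Theta_X,B)=X\subseteq C(\alpha,B)$ using part~(2) and monotonicity of $\Psi$; the reverse follows by applying $\Phi$ and part~(1). I would likely present the proof either through this single equivalence or, more transparently for a reader, by separately citing the two identities of Lemma~\ref{L:bijcongruencesubspace} for inverseness and then noting the two one-line monotonicity checks.

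I do not anticipate a genuine obstacle: every nontrivial step (that $C(\alpha,B)$ is a subalgebra containing $B$, and the two inversion identities) has been proved in the lemmas that I am permitted to assume. The only point requiring mild care is the monotonicity of $X\mapsto\Theta_X$, which is not stated as a separate lemma but is an immediate consequence of $\Theta_B$ being defined as the \emph{smallest} congruence identifying the pairs from $B$; I would spell this out in one sentence rather than leave it implicit. The proof will therefore be short, essentially a citation-and-glue argument.
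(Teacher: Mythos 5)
Your proposal is correct and follows essentially the same route as the paper: the paper's proof likewise observes that $X\mapsto\Theta_X$ and $\alpha\mapsto C(\alpha,B)$ are both isotone and then invokes Lemma~\ref{L:bijcongruencesubspace} for the two inversion identities, from which the lattice isomorphism follows. Your additional remarks (well-definedness via Lemma~\ref{L:subspacefromcongruence} and the optional Galois-adjunction packaging) are consistent elaborations of the same argument.
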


\begin{proof}
Denote by $\cS=\setm{X\in\Sub\bA}{X\supseteq B}$ the lattice of all subalgebras of~$\bA$ containing $B$, and by $\cC=\setm{\alpha\in\Con\bA}{\alpha\supseteq\Theta_B}$ the lattice of all congruences of~$\bA$ containing $\Theta_B$.

We can see from the definition, that if $X\supseteq Y$, then $\Theta_X\supseteq\Theta_Y$. In particular $X\mapsto\Theta_X$ is an isotone map from $\cS$ to $\cC$.

Similarly if $\alpha\subseteq\beta$ then $C(\alpha,B)\subseteq C(\beta,B)$. The conclusion follows from Lemma~\ref{L:bijcongruencesubspace}.
\end{proof}

It follows that a quotient of an Abelian algebra by a meet-irreducible subalgebra is subdirectly irreducible. That is, the following statement holds.

\begin{corollary}\label{C:quotientSI}
Let~$\bA$ be an Abelian algebra. Let $B\in\Sub\bA$. If $B$ is a meet-irreducible, then $A/\Theta_B$ is subdirectly irreducible.
\end{corollary}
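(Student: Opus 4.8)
The plan is to reduce subdirect irreducibility of $A/\Theta_B$ to meet-irreducibility of $B$ by combining the correspondence theorem for congruences with the lattice isomorphism of Theorem~\ref{T:bijcongruencesubspace}. First I would recall that $A/\Theta_B$ is subdirectly irreducible precisely when its least congruence is (completely) meet-irreducible in $\Con(A/\Theta_B)$; since the ambient algebras here are finite, meet-irreducibility and complete meet-irreducibility coincide, so it suffices to show the bottom congruence of the quotient is meet-irreducible.

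Next I would invoke the standard correspondence theorem: the congruences of the quotient $A/\Theta_B$ are in order-isomorphic bijection with the congruences of~$\bA$ containing $\Theta_B$, the least congruence of $A/\Theta_B$ corresponding to $\Theta_B$ itself and the full congruence $A\times A=\Theta_A$ corresponding to the top. Thus $\Con(A/\Theta_B)$ is isomorphic to the interval of congruences $\alpha$ with $\Theta_B\subseteq\alpha\subseteq\Theta_A$.

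Now I would apply Theorem~\ref{T:bijcongruencesubspace}, which gives mutually inverse lattice isomorphisms $X\mapsto\Theta_X$ and $\alpha\mapsto C(\alpha,B)$ between the subalgebras of~$\bA$ containing $B$ and the congruences of~$\bA$ containing $\Theta_B$. Under this isomorphism the bottom element $B$ corresponds to $\Theta_B$ and the top element $A$ to $\Theta_A$. Composing with the correspondence theorem, $\Con(A/\Theta_B)$ is isomorphic to the lattice $\setm{X\in\Sub\bA}{X\supseteq B}$ of subalgebras of~$\bA$ containing $B$, with the least congruence of the quotient corresponding to $B$. Hence $A/\Theta_B$ is subdirectly irreducible if and only if $B$ is meet-irreducible in this interval.

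Finally I would observe that meet-irreducibility of $B$ within this interval is the same as meet-irreducibility of $B$ in the whole lattice $\Sub\bA$: if $B=X\cap Y$ for subalgebras $X,Y$, then automatically $X\supseteq B$ and $Y\supseteq B$, so the relevant meets all take place inside the interval and agree with the meet of $\Sub\bA$. Since $B$ is assumed meet-irreducible, the conclusion follows; one should also note that $A/\Theta_B$ is nontrivial, as a meet-irreducible element is proper, so $B\neq A$ and therefore $\Theta_B\neq\Theta_A$. I expect the only delicate point to be the bookkeeping of the two lattice isomorphisms, so that the bottom congruence is really matched with $B$; once that matching is pinned down, the transfer of (complete) meet-irreducibility is immediate.
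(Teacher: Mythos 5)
Your proposal is correct and is exactly the argument the paper intends (the paper states the corollary as an immediate consequence of Theorem~\ref{T:bijcongruencesubspace} without writing out a proof): compose the correspondence theorem for $\Con(A/\Theta_B)$ with the lattice isomorphism onto $\setm{X\in\Sub\bA}{X\supseteq B}$, and note that meet-irreducibility of $B$ in that interval agrees with meet-irreducibility in $\Sub\bA$. Your remarks on finiteness (so that meet-irreducible and completely meet-irreducible coincide) and on $B\neq A$ are the right points to check and are handled correctly.
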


The following corollary expresses that a completely meet-irreducible subalgebra of an Abelian algebra is a ``kernel'' of a morphism into a subdirectly irreducible algebra.

\begin{corollary}\label{C:quotient}

Let~$\bA$ be an Abelian algebra, let $B\in\Sub\bA$ be completely meet-irreducible. Then there exists a subdirectly irreducible algebra~$\bS$ in the variety generated by~$\bA$, a morphism of algebras $f\colon \bA\to \bS$ and an element $c\in S$ such that:
\begin{enumerate}
\item $B=\setm{x\in A}{f(x)=c}$.
\item $c$ is preserved by all operations of~$\bS$, equivalently $\set{c}$ is the underlying set of a subalgebra of~$\bS$.
\end{enumerate}
\end{corollary}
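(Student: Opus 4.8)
The plan is to build the subdirectly irreducible algebra $\bS$ as a quotient of $\bA$ by the congruence $\Theta_B$, and to extract the distinguished element $c$ from the subalgebra structure of $B$ itself. Since $B$ is completely meet-irreducible in $\Sub\bA$, Theorem~\ref{T:bijcongruencesubspace} transports this property across the lattice isomorphism $X\mapsto\Theta_X$: the image of $B$ under this map, namely $\Theta_B$, is completely meet-irreducible in the lattice $\cC$ of congruences containing $\Theta_B$. I would then argue that $\Theta_B$ is in fact completely meet-irreducible in the full congruence lattice $\Con\bA$, because any congruence $\alpha$ not containing $\Theta_B$ can be replaced by $\alpha\vee\Theta_B\in\cC$ without affecting the meet below $\Theta_B$; this reduction lets me invoke the correspondence only where it applies. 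Consequently $\bS=\bA/\Theta_B$ is subdirectly irreducible (this is exactly Corollary~\ref{C:quotientSI}), and it lies in the variety generated by~$\bA$ since it is a quotient of~$\bA$.

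Next I would define $f\colon\bA\to\bS$ to be the canonical quotient map and set $c=f(b)$ for any $b\in B$. This is well-defined and independent of the choice of $b$: since $\Theta_B$ identifies all pairs of elements of $B$ by definition, every element of $B$ maps to the single class $c$. For item~(1), I must show $B=f^{-1}(c)$. The inclusion $B\subseteq f^{-1}(c)$ is immediate from the previous sentence. For the reverse inclusion, suppose $f(x)=c$, i.e.\ $(x,b)\in\Theta_B$ for some (equivalently every) $b\in B$; applying the description \eqref{E:congruenceexists} of $\Theta_B$ from Lemma~\ref{L:congruencefromsubspace}, or more directly using that $x=t(x,b,b)$ with the pair $(x,b)$ lying in $\Theta_B$, I recover $x\in B$. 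The key point is that $B$ is a union of $\Theta_B$-classes, which is precisely the content of Lemma~\ref{L:bijcongruencesubspace}(2) applied with $X=B$: it gives $C(\Theta_B,B)=B$, and by definition $C(\Theta_B,B)=\setm{x\in A}{(x,b)\in\Theta_B\text{ for all }b\in B}=f^{-1}(c)$.

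For item~(2), I need $\set{c}$ to be the underlying set of a subalgebra of~$\bS$, equivalently that every operation of~$\bS$ fixes $c$. Given an $n$-ary operation $\lambda$ of~$\bA$ (inducing the corresponding operation on~$\bS$), I evaluate at the constant tuple: $\lambda(b,\dots,b)\in B$ because $B$ is closed under $\lambda$, and therefore the induced operation on $\bS$ sends $(c,\dots,c)=(f(b),\dots,f(b))$ to $f(\lambda(b,\dots,b))=c$. Thus $\set{c}$ is stable under all basic operations of~$\bS$, hence is a subalgebra.

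I expect the main obstacle to be the lattice-theoretic bookkeeping in the first paragraph: Theorem~\ref{T:bijcongruencesubspace} only establishes an isomorphism between the \emph{intervals} $\cS=[B,\bA]$ in $\Sub\bA$ and $\cC=[\Theta_B,\nabla]$ in $\Con\bA$, so complete meet-irreducibility of $B$ relative to $\Sub\bA$ must first be shown to coincide with complete meet-irreducibility relative to the interval $\cS$, and then transferred to $\Con\bA$. The subtlety is that an arbitrary family of subalgebras meeting to $B$ need not consist of members of $\cS$; but any $X\in\Sub\bA$ with $B\subseteq\bigcap$ forces each $X\supseteq B$ only after intersecting with the sublattice, so I would verify that $B$ completely meet-irreducible in $\Sub\bA$ implies $\Theta_B$ completely meet-irreducible in $\cC$, and then handle the jump from $\cC$ to $\Con\bA$ by the join-reduction described above. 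Everything else reduces to the identities for $t$ already exploited throughout Section~3.
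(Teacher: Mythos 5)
Your proposal is correct and follows essentially the same route as the paper: take $\bS=\bA/\Theta_B$ with the canonical projection $f$, identify $c$ with the congruence class $B=C(\Theta_B,B)$ via Lemma~\ref{L:bijcongruencesubspace}(2), and verify item (2) from $\lambda(b,\dots,b)\in B$. The lattice-theoretic bookkeeping you flag as the main obstacle is in fact harmless --- any family of subalgebras (resp.\ congruences) whose meet is $B$ (resp.\ $\Theta_B$) automatically lies in the relevant interval --- and the paper simply absorbs this into the citation of Corollary~\ref{C:quotientSI}.
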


\begin{proof}
Set $\bS=\bA/\Theta_B$. Denote by $f\colon\bA\to\bS$ the canonical projection. By Corollary~\ref{C:quotientSI} the algebra~$\bS$ is subdirectly irreducible. Let $b\in B$, note that $f(b)=b/\Theta_B=\setm{x\in A}{(x,b)\in\Theta_B}=C(\Theta_B,B)=B$, we denote by $c$ this element of~$\bS$. Hence $\setm{x\in A}{f(x)=c}=B$.

Moreover, given $b\in B$ and $\lambda$ an operation of~$\bA$, we have $\lambda(b,\dots,b)\in B$, hence $\lambda(b,\dots,b)/\Theta_B=B=c$, that is, $\lambda(c,\dots,c)=c$.
\end{proof}

\section{Counting morphisms}

\begin{lemma}\label{L:CardinalHomomorphismes}
Let~$\bA$ and~$\bB$ be Abelian algebras in a congruence-modular variety. Assume that $\card{A}=p_1^{\alpha_1}p_2^{\alpha_2}\dots p_k^{\alpha_k}$ and $\card{B}=p_1^{\beta_1}p_2^{\beta_2}\dots p_k^{\beta_k}$, where $p_1,\dots,p_k$ are distinct primes. Then the following statement holds
\begin{equation*}
\card{\Hom(\bA,\bB)}\text{ divides }p_1^{(\alpha_1+1)\beta_1}p_2^{(\alpha_2+1)\beta_2}\dots p_k^{(\alpha_k+1)\beta_k}\,.
\end{equation*}
Moreover, if~$\bA$ and~$\bB$ have a group structure then
\begin{equation*}
\card{\Hom(\bA,\bB)}\text{ divides }p_1^{\alpha_1\beta_1}p_2^{\alpha_2\beta_2}\dots p_k^{\alpha_k\beta_k}\,.
\end{equation*}
\end{lemma}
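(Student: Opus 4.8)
The plan is to derive both bounds from a single purely group-theoretic estimate on the number of \emph{group} homomorphisms, and then to pass from group homomorphisms to general affine morphisms at the cost of one extra factor of $\card{B}$. Throughout I fix Abelian group structures $\alg{A;+,-,0}$ and $\alg{B;+,-,0}$ with $t(x,y,z)=x-y+z$, as in Section~2, and I write $\Hom_{\mathrm{gp}}(A,B)$ for the Abelian group of all group homomorphisms $\alg{A;+,-,0}\to\alg{B;+,-,0}$ under pointwise addition.

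First I would prove the group estimate: $\card{\Hom_{\mathrm{gp}}(A,B)}$ divides $p_1^{\alpha_1\beta_1}\cdots p_k^{\alpha_k\beta_k}$. By the primary decomposition one has $\Hom_{\mathrm{gp}}(A,B)\cong\prod_i\Hom_{\mathrm{gp}}(A_{p_i},B_{p_i})$, where $A_{p_i},B_{p_i}$ are the primary components of orders $p_i^{\alpha_i},p_i^{\beta_i}$ (the mixed factors vanish, a homomorphism between groups of coprime orders being trivial); this reduces the claim to a single prime. Fixing a prime $p$ and cyclic decompositions $A_p\cong\bigoplus_s\mathbb{Z}/p^{a_s}$ and $B_p\cong\bigoplus_t\mathbb{Z}/p^{b_t}$ with $\sum_s a_s=\alpha$ and $\sum_t b_t=\beta$, one computes $\card{\Hom_{\mathrm{gp}}(A_p,B_p)}=p^{\sum_{s,t}\min(a_s,b_t)}$ from $\card{\Hom_{\mathrm{gp}}(\mathbb{Z}/p^{a},\mathbb{Z}/p^{b})}=p^{\min(a,b)}$. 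Because each $a_s,b_t\ge1$ we have $\min(a_s,b_t)\le a_sb_t$, hence $\sum_{s,t}\min(a_s,b_t)\le\sum_{s,t}a_sb_t=\alpha\beta$, giving divisibility by $p^{\alpha\beta}$. Taking the product over the primes proves the estimate, and the ``group structure'' statement follows at once: when $\bA$ and $\bB$ have a group structure, $\Hom(\bA,\bB)$ is a subgroup of $\Hom_{\mathrm{gp}}(A,B)$, so its order divides $\card{\Hom_{\mathrm{gp}}(A,B)}$.

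For the general case I would exploit that every morphism $f\colon\bA\to\bB$ preserves the affine term, i.e.\ $f(x-y+z)=f(x)-f(y)+f(z)$, so that $\Hom(\bA,\bB)$ is closed under $t$ computed pointwise in $\alg{B;+,-,0}^A$. Assuming $\Hom(\bA,\bB)\ne\es$ (otherwise there is nothing to prove), fix $f_0\in\Hom(\bA,\bB)$; since a nonempty subset of an Abelian group that is closed under $(x,y,z)\mapsto x-y+z$ is a coset, the set $H:=\setm{f-f_0}{f\in\Hom(\bA,\bB)}$ is a subgroup of $\alg{B;+,-,0}^A$ and $\card{\Hom(\bA,\bB)}=\card{H}$. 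Each $\phi\in H$ again preserves $t$, so $x\mapsto\phi(x)-\phi(0)$ lies in $\Hom_{\mathrm{gp}}(A,B)$, and the assignment $\phi\mapsto\bigl(x\mapsto\phi(x)-\phi(0),\ \phi(0)\bigr)$ is an injective group homomorphism $H\to\Hom_{\mathrm{gp}}(A,B)\times\alg{B;+,-,0}$. By Lagrange, $\card{H}$ divides $\card{\Hom_{\mathrm{gp}}(A,B)}\cdot\card{B}$, which by the group estimate divides $p_1^{\alpha_1\beta_1}\cdots p_k^{\alpha_k\beta_k}\cdot p_1^{\beta_1}\cdots p_k^{\beta_k}=p_1^{(\alpha_1+1)\beta_1}\cdots p_k^{(\alpha_k+1)\beta_k}$, as required.

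The point to get right — and the reason the argument is phrased through subgroups rather than a plain counting injection — is that the statement asks for \emph{divisibility}, not merely an inequality $\card{\Hom(\bA,\bB)}\le\card{\Hom_{\mathrm{gp}}(A,B)}\cdot\card{B}$; this is exactly what Lagrange's theorem supplies once $H$ is identified as a subgroup, which in turn rests on the fact that $\Hom(\bA,\bB)$ is $t$-closed. The only mild nuisance is the degenerate case $\Hom(\bA,\bB)=\es$, where the cardinality is $0$ and the statement is read vacuously.
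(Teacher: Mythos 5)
Your proof is correct, and the affine-to-group reduction in your last paragraph is essentially the paper's own argument: the paper also fixes a basepoint $a\in A$ and embeds $\Hom(\bA,\bB)$ (as an affine/$t$-structure) into $\Hom(\alg{A;+},\alg{B;+})\times B$ via $f\mapsto(x\mapsto f(x)-f(a),\,f(a))$, which is exactly your map after your preliminary translation by $f_0$. Where you genuinely diverge is the group-theoretic estimate. The paper avoids the structure theorem for $\Hom$ between finite Abelian groups: it takes, in each Sylow subgroup $\bA_i$ of order $p_i^{\alpha_i}$, a generating set $F_i$ with $\card{F_i}=\alpha_i$, and embeds $\Hom(\bA,\bB)$ into $B_1^{F_1}\times\dots\times B_k^{F_k}$ by restriction, noting that generators of $p_i$-power order must land in $B_i$. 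You instead compute $\card{\Hom(\mathbb{Z}/p^{a},\mathbb{Z}/p^{b})}=p^{\min(a,b)}$ from cyclic decompositions and bound $\sum_{s,t}\min(a_s,b_t)\le\alpha\beta$. Your route gives the sharper exact exponent along the way; the paper's route reuses machinery it needs anyway (the generating-set bound of Lemma~\ref{L:TaileFamilleGeneratrice} reappears in Corollary~\ref{C:cardinalHAS}). Both are valid. One small wording fix: in the ``group structure'' case, $\Hom(\bA,\bB)$ need not be a \emph{subgroup} of $\Hom_{\mathrm{gp}}(A,B)$ (the zero map need not respect the extra operations of $\bA$); it is a nonempty $t$-closed subset, hence a coset of a subgroup, which still gives the divisibility you want --- and you already invoke precisely this coset argument in the general case, so the fix is cosmetic.
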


\begin{proof}
Assume that~$\bA$ and~$\bB$ are Abelian groups. Given $1\le i\le k$, denote by~$\bA_i$ the $p_i$-Sylow subgroup of~$\bA$. That is, $A_i$ is the set of all element of~$\bA$ of order a power of $p_i$. Similarly denote by~$\bB_i$ the $p_i$-Sylow subgroup of~$\bB$. Note that $\card{A_i}=p_i^{\alpha_i}$ and $\card{B_i}=p_i^{\beta_i}$. Let $F_i$ be a generating set of~$\bA_i$ such that $\card{F_i}=\alpha_i$. Given $a\in F_i$ and $f\in\Hom(\bA,\bB)$, the element $f(a)$ of~$\bB$ is of order a power of $p_i$ hence $f(a)\in B_i$.

Therefore the following map is well-defined:
\begin{align*}
\varphi\colon\Hom(\bA,\bB)&\to B_1^{F_1}\times B_2^{F_2}\times\dots\times B_k^{F_k}\\
f&\mapsto (f\res F_1,f\res F_2,\dots,f\res F_k)\,.
\end{align*}
As~$\bA$ is generated by $F_1\cup F_2\cup\dots\cup F_k$, the map $\varphi$ is injective. Moreover $\varphi$ is a morphism of groups. Therefore $\alg{\Hom(\bA,\bB);+}$ is isomorphic to a subgroup of~$\bB_1^{F_1}\times \bB_2^{F_2}\times\dots\times \bB_k^{F_k}$. So $\card{\Hom(\bA,\bB)}$ divides $\card{B_1^{F_1}\times B_2^{F_2}\times\dots\times B_k^{F_k}}$. Moreover the following equalities hold
\begin{align*}
\card{B_1^{F_1}\times B_2^{F_2}\times\dots\times B_k^{F_k}}
&=(p_1^{\beta_1})^{\card{F_1}}\times (p_2^{\beta_2})^{\card{F_2}}\times \dots\times (p_k^{\beta_k})^{\card{F_k}}\\
&=(p_1^{\alpha_1\beta_1})\times (p_2^{\alpha_2\beta_2})\times \dots\times (p_k^{\alpha_k\beta_k})\,.
\end{align*}

We now only assume that~$\bA$ and~$\bB$ are Abelian algebras. Note that $\Hom(\bA,\bB)$ is the underlying set of a sub-algebra of $\alg{B;t}^A$. Set $H=\Hom(\bA,\bB)$ and $\bH=\alg{H;t}$.

Pick $a\in A$. We can assume that the neutral element of the Abelian group operation $+$ on $A$, inducing the affine structure of~$\bA$, is $a$. That is, $x+y=t(x,a,y)$ for all $x,y\in A$. Similarly, pick $b\in B$, we can assume that $x+y=t(x,b,y)$ for all $x,y\in B$.

We consider $K=\Hom(\alg{A;+},\alg{B;+})$ and $\bK=\alg{K;t}$. That is, we only consider the affine structure on the set of group morphisms $\alg{A;+}\to \alg{B;+}$. 

Given $f\in\Hom(\bA,\bB)$, we consider $\psi(f)\colon A\to B$, $x\mapsto f(x)-f(a)$. The following equalities hold
\begin{align*}
\psi(f)(x+y)&=f(x+y)-f(a)\\
&=f(x-a+y)-f(a)\\
&=f(x)-f(a)+f(y)-f(a)\\
&=\psi(f)(x)+\psi(f)(y)\,.
\end{align*}
Hence $\psi(f)$ is a morphism of groups, hence $\psi\in K$. Let $f,g,h\in H$. Let $x\in A$. The following equalities hold
\begin{align*}
\psi(f-g+h)(x)&=(f-g+h)(x)-t(f-g+h)(a)\\
&=f(x)-g(x)+h(x)-(f(a)-g(a)+h(a))\\
&=f(x)-f(a)-(g(x)-g(a))+h(x)-h(a)\\
&=\psi(f)(x)-\psi(g)(x)+\psi(h)(x)\,.
\end{align*}
That is, $\psi(t(f,g,h))=\psi(f-g+h)=\psi(f)-\psi(g)+\psi(h)=t(\psi(f),\psi(g),\psi(h))$. Therefore $\psi\colon\bH\to\bK$ is a morphism. Moreover $\bH\to \alg{B;t}$, $f\mapsto f(a)$ is a morphism. Thus the following map defines a morphism
\begin{align*}
\sigma\colon\bH&\to\bK\times\alg{B;t}\\
f&\mapsto (\psi(f),f(a))\,.
\end{align*}
Let $f,g\in H$ such that $\sigma(f)=\sigma(g)$. That is, $f(a)=g(a)$ and $\psi(f)=\psi(g)$. Let $x\in A$, note that $\psi(f)(x)=\psi(g)(x)$, hence $f(x)-f(a)=g(x)-g(a)$, so $f(x)=g(x)$, hence $f=g$. Therefore $\sigma$ is an embedding. Thus $\card{H}$ divides $\card{K\times B}$. Moreover from the group case $\card K$ divides $p_1^{\alpha_1\beta_1}p_2^{\alpha_2\beta_2}\dots p_k^{\alpha_k\beta_k}$. Therefore $\card H$ divides 
$p_1^{\alpha_1\beta_1}p_2^{\alpha_2\beta_2}\dots p_k^{\alpha_k\beta_k} p_1^{\beta_1}p_2^{\beta_2}\dots p_k^{\beta_k}$.
\end{proof}

\begin{lemma}\label{L:TaileFamilleGeneratrice}
Let~$\bA$ be an Abelian group \pup{resp., an Abelian algebra}. Assume that $\card A=p_1^{\alpha_1}p_2^{\alpha_2}\dots p_k^{\alpha_k}$, where $p_1,\dots,p_k$ are distinct primes. Then~$\bA$ has a family of generators with $\max_{1\le i\le k}(\alpha_i)$ elements \pup{resp., $1+\max_{1\le i\le k}(\alpha_i)$ elements}.
\end{lemma}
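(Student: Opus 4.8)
The plan is to prove the group statement first, using the structure theory of finite Abelian groups together with the Chinese Remainder Theorem, and then to deduce the algebra statement by transferring the problem to the group structure supplied by the affine term~$t$.

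For the group case I would decompose $\bA$ as the internal direct product $\bA_1\times\dots\times\bA_k$ of its Sylow subgroups, where $\card{A_i}=p_i^{\alpha_i}$. Each $\bA_i$ is a finite Abelian $p_i$-group, so by the classification of such groups it is a direct product of cyclic groups $\mathbb Z/p_i^{e_{i,1}}\times\dots\times\mathbb Z/p_i^{e_{i,r_i}}$ with each exponent $e_{i,j}\ge 1$ and $\sum_j e_{i,j}=\alpha_i$; hence $\bA_i$ is generated by $r_i$ elements $g_{i,1},\dots,g_{i,r_i}$ with $r_i\le\alpha_i$. The point is then to combine these into a single generating family of size only $r=\max_i r_i$, rather than the naive $\sum_i r_i$. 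Set $g_{i,j}=0$ whenever $j>r_i$, and define $h_j=(g_{1,j},\dots,g_{k,j})\in\bA$ for $1\le j\le r$. Using the Chinese Remainder Theorem, for each $i$ I choose an integer $e_i$ with $e_i\equiv 1\pmod{p_i^{\alpha_i}}$ and $e_i\equiv 0\pmod{p_{i'}^{\alpha_{i'}}}$ for all $i'\ne i$; then $e_i\cdot h_j$ is the element of $\bA$ whose $i$-th component is $g_{i,j}$ and whose other components vanish. Consequently the subgroup generated by $h_1,\dots,h_r$ contains every $g_{i,j}$ viewed in the $i$-th factor, hence contains each $\bA_i$, hence all of $\bA$. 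Since $r\le\max_i\alpha_i$, padding the family with copies of $0$ if necessary yields a generating family of exactly $\max_i\alpha_i$ elements.

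For the algebra case I would fix the Abelian group operation $+$ attached to the affine term $t$, so that $t(x,y,z)=x-y+z$. The key observation is that every subalgebra of $\bA$ is closed under the term $t$, so the subalgebra generated by a set $S$ always contains the closure of $S$ under $t$ alone; it therefore suffices to produce a set $S$ whose $t$-closure is all of $A$. By Lemma~\ref{L:TermesAffines}, every $t$-term is an integer combination $\sum_k u_k x_k$ with $\sum_k u_k=1$, so the $t$-closure of $S=\set{s_0,s_1,\dots,s_g}$ is exactly the coset $s_0+\alg{s_1-s_0,\dots,s_g-s_0}$ of the subgroup generated by the differences. Hence generating $A$ from $S$ under $t$ is equivalent to the differences $s_1-s_0,\dots,s_g-s_0$ generating the group $\alg{A;+}$. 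Taking $g=\max_i\alpha_i$ group generators $d_1,\dots,d_g$ from the group case, fixing any basepoint $s_0$, and setting $s_j=s_0+d_j$, I obtain a family $\set{s_0,s_1,\dots,s_g}$ of $1+\max_i\alpha_i$ elements generating $\bA$ under $t$, and hence under all operations of $\bA$.

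The only genuinely delicate points are the two reductions themselves. In the group case one must resist generating each Sylow factor separately, which would give the far worse bound $\sum_i\alpha_i$; the coprimality of the orders, exploited through the idempotent multipliers $e_i$, is precisely what collapses the count to $\max_i\alpha_i$. In the algebra case the extra $+1$ is forced by the absence of a distinguished origin: the term $t$ produces only affine cosets, so one generator must be spent pinning down a basepoint $s_0$ before the remaining $\max_i\alpha_i$ generators can recover the full group. Both reductions are routine once set up, so I expect no serious obstacle beyond this bookkeeping.
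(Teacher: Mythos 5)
Your proof is correct and follows essentially the same route as the paper: the group case extracts the bound $\max_i\alpha_i$ from the structure theory of finite Abelian groups (the paper quotes the invariant-factor decomposition $\mathbb Z_{u_1}\times\dots\times\mathbb Z_{u_q}$ with $u_1\mid\dots\mid u_q$ and notes $p_i^q\mid\card A$, whereas you re-derive the same bound from the primary decomposition plus CRT), and the algebra case reduces to the group case by spending one extra generator on a basepoint so that $+$ becomes expressible via $t$. The only cosmetic difference is that the paper takes the basepoint to be the neutral element of $+$, while you allow an arbitrary $s_0$ and verify the $t$-closure is the corresponding coset; both are valid.
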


\begin{proof}
Set $N=\max_{1\le i\le k}(\alpha_i)$. 

If~$\bA$ is an Abelian group. It follows from the fundamental theorem on finitely generated Abelian groups that~$\bA$ is isomorphic to $\mathbb{Z}_{u_1}\times\dots\times \mathbb{Z}_{u_q}$, where $1<u_1\mid u_2\mid \dots\mid u_q$. Let $i$ such that $p_i\mid u_1$, so $p_i^q \mid u_1\dots u_q=\card{A}=p_1^{\alpha_1}p_2^{\alpha_2}\dots p_k^{\alpha_k}$. Hence $q\le\alpha_i\le N$. Therefore~$\bA$ has a family of generators with $q\le N$ elements.

If~$\bA$ is an Abelian algebra. Denote by $+$ an Abelian group operation on $A$, yielding the affine structure of~$\bA$. Denote by $a_0$ the neutral element of $+$. Let $a_1,\dots,a_N$ be a generating family of $\alg{A;+}$. As $x+y=t(x,a_0,y)$ for all $x,y\in A$, it follows that $a_0,\dots,a_N$ generate~$\bA$.
\end{proof}

Kearnes gives in \cite{K} an upper bound to subdirectly irreducible algebras in a finitely generated, congruence-modular variety satisfying the congruence-extension property, this result applies in particular for Abelian algebras. Indeed Kearnes points that this case corresponds to modules.

\begin{lemma}[Kearnes]
Let~$\bA$ be a finite Abelian algebra, let~$\bS$ be a subdirectly irreducible algebra in $\Var\bA$. Then $\card S$ divides $\card{\Hom(\alg{A;+},\alg{A;+})}$.
\end{lemma}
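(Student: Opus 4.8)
The plan is to strip $\bA$ down to its affine structure and then recognize the statement as a cardinality bound for subdirectly irreducible modules, which I would settle by Matlis (character‑module) duality. First I would invoke Herrmann's theorem to fix an affine term $t$ and a group $\alg{A;+}$. The unary term operations of $\bA$ that fix the chosen neutral element, together with the pointwise $+$ and composition, form a finite ring $R$, and $\bA$ is term‑equivalent to the faithful $R$‑module $_R\!A$ (this is the standard ``ring of an affine algebra'', cf. \cite{FMBook}). Since each such scalar is an actual unary term operation, the action is faithful, and since $t$ is compatible with all terms, every unary term fixing the neutral element is an endomorphism of $\alg{A;+}$. Hence $R$ embeds as a subring — in particular as an additive subgroup — of $\Hom(\alg{A;+},\alg{A;+})$, so by Lagrange $\card R$ divides $\card{\Hom(\alg{A;+},\alg{A;+})}$ (Lemma~\ref{L:CardinalHomomorphismes} then even makes this explicit prime by prime).

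Next I would pass from $\bS$ to a module. Being in $\Var\bA$, the algebra $\bS$ is affine over the same ring $R$ (the ring is a term‑equivalence invariant of the variety); choosing any basepoint turns $\bS$ into an $R$‑module $\bar S$ with $\card{\bar S}=\card S$. The congruences of the affine algebra $\bS$ are exactly the submodules of $\bar S$, so $\bS$ subdirectly irreducible means $\bar S$ is a subdirectly irreducible $R$‑module: it has a monolith $T$, which is its socle, hence simple and essential. Therefore $\bar S$ embeds, as an $R$‑module and so as a finite abelian group, into the injective hull $E(T)$, which is finite because $R$ is a finite Artinian ring. Reading $\bar S\hookrightarrow E(T)$ as an inclusion of finite abelian groups, Lagrange gives $\card S \mid \card{E(T)}$.

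The remaining point is to bound $\card{E(T)}$ by $\card R$. For this I would apply the functor $D(M)=\Hom_{\mathbb{Z}}(M,\mathbb{Q}/\mathbb{Z})$, which preserves cardinality on finite modules, is an exact anti‑equivalence, and interchanges left and right $R$‑modules. Because $E(T)$ is injective (equivalently, over the Noetherian ring $R$, absolutely pure), its character module $D(E(T))$ is a finitely generated flat, hence projective, $R^{\mathrm{op}}$‑module; being the dual of an indecomposable it is an indecomposable projective, i.e. a direct summand of the regular module $R^{\mathrm{op}}$. Thus $\card{E(T)}=\card{D(E(T))}$ divides $\card{R^{\mathrm{op}}}=\card R$. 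Chaining the three divisibilities yields $\card S \mid \card{E(T)} \mid \card R \mid \card{\Hom(\alg{A;+},\alg{A;+})}$, which is the claim.

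The hard part will be this last step: justifying the duality between injective hulls and indecomposable projectives while keeping control of cardinalities (the facts that $D$ preserves $\card{}$ on finite modules, sends injectives to flats, and that finitely generated flats over an Artinian ring are projective), and checking that $E(T)$ is genuinely finite. By contrast the affine‑to‑module reduction, though it needs the mild care of choosing a basepoint so that $\bS$ becomes a bona fide $R$‑module and of identifying $R$ inside $\Hom(\alg{A;+},\alg{A;+})$, is routine given Herrmann's theorem and the structure theory in \cite{FMBook}; once it is in place the divisibility is immediate.
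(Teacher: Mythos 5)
First, a point of reference: the paper itself gives no proof of this lemma --- it is quoted from Kearnes \cite{K} --- so there is no internal argument to compare yours against. What you have written is in substance a reconstruction of Kearnes's own proof (pass to modules over the finite ring $R$ of the affine variety, embed a subdirectly irreducible module into the injective hull $E(T)$ of its monolith, and control that hull by character-module duality), and the skeleton is sound: every unary polynomial of $\bA$ fixing the basepoint is an endomorphism of $\alg{A;+}$ by the affine identity, the representation of $R$ is faithful because $\bA$ generates the variety, so $\card{R}$ divides $\card{\Hom(\alg{A;+},\alg{A;+})}$ by Lagrange; congruences of $\bS$ are submodules of $\bar S$; and the final chain of divisibilities is correct. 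One small repair: $\bA$ is \emph{polynomially}, not term-, equivalent to ${}_RA$ --- the paper's Remark~\ref{R:AlgebreAffine} stresses that $+$, $-$, $0$ (and hence the scalars) are in general not terms --- but polynomial equivalence preserves congruence lattices, which is all you use, so nothing breaks.

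The one genuine gap is the finiteness of $E(T)$, which you flag but support only by ``$R$ is a finite Artinian ring''. Artinianness alone does not suffice: there are Artinian rings over which the injective hull of a simple module is not finitely generated (Rosenberg--Zelinsky), so the step as written is unjustified, and without it ``finitely generated flat'' has no meaning. The finiteness of $R$ is what saves you, via an argument that also shortcuts your flat-implies-projective detour. For a finite ring $R$, the character module $D(R)=\Hom_{\mathbb{Z}}(R,\mathbb{Q}/\mathbb{Z})$ is an injective cogenerator for left $R$-modules with $\card{D(R)}=\card{R}$. Hence the simple module $T$ embeds into $D(R)$; since $T$ is essential in $E(T)$ and $D(R)$ is injective, that embedding extends to an embedding $E(T)\into D(R)$, which proves $E(T)$ is finite; and since $E(T)$ is itself injective, it is a direct summand of $D(R)$, so $\card{E(T)}$ divides $\card{D(R)}=\card{R}$ outright. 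Substituting this for your Matlis-style step, the chain $\card{S}\mid\card{E(T)}\mid\card{R}\mid\card{\Hom(\alg{A;+},\alg{A;+})}$ is complete.
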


As an immediate consequence of Kearnes result and of Lemma~\ref{L:CardinalHomomorphismes} we obtain the following corollary.

\begin{corollary}\label{C:CardinalSI}
Let~$\bA$ be an Abelian algebra in a congruence-modular variety. Assume that $\card{A}=p_1^{\alpha_1}p_2^{\alpha_2}\dots p_k^{\alpha_k}$, where $p_1,\dots,p_k$ are distinct primes. Let~$\bS$ be a subdirectly irreducible algebra in $\Var\bA$. Then $\card S$ divides $p_1^{\alpha_1^2}p_2^{\alpha_2^2}\dots p_k^{\alpha_k^2}$.
\end{corollary}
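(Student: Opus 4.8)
The plan is to chain the two cited results, applied to the Abelian group $\alg{A;+}$ that gives~$\bA$ its affine structure. First I would invoke Kearnes' lemma: since~$\bS$ is subdirectly irreducible in $\Var\bA$, it gives immediately that $\card S$ divides $\card{\Hom(\alg{A;+},\alg{A;+})}$. This reduces the corollary to estimating the number of group endomorphisms of $\alg{A;+}$.

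Next I would bound that number using Lemma~\ref{L:CardinalHomomorphismes}. The key observation is that $\alg{A;+}$ is an honest Abelian group, so the applicable assertion is the \emph{second} (group) one, not the first. Taking $\bB=\bA=\alg{A;+}$, the prime decomposition of the cardinality is the same on source and target, so $\beta_i=\alpha_i$ for every~$i$. The group case of Lemma~\ref{L:CardinalHomomorphismes} then gives that $\card{\Hom(\alg{A;+},\alg{A;+})}$ divides $p_1^{\alpha_1\alpha_1}p_2^{\alpha_2\alpha_2}\dots p_k^{\alpha_k\alpha_k}=p_1^{\alpha_1^2}p_2^{\alpha_2^2}\dots p_k^{\alpha_k^2}$.

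Combining the two divisibilities by transitivity yields that $\card S$ divides $p_1^{\alpha_1^2}p_2^{\alpha_2^2}\dots p_k^{\alpha_k^2}$, which is exactly the claim. There is essentially no obstacle here, the statement being a direct composition of the two inputs; the only point deserving care is the choice of the group bound over the general Abelian-algebra bound, which is what produces the sharp exponent $\alpha_i^2$ rather than the weaker $(\alpha_i+1)\alpha_i$ that the first assertion of Lemma~\ref{L:CardinalHomomorphismes} would supply.
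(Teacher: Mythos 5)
Your proposal is correct and is precisely the argument the paper intends: the corollary is stated as an immediate consequence of Kearnes' lemma combined with the group case of Lemma~\ref{L:CardinalHomomorphismes} applied to $\alg{A;+}$ with $\beta_i=\alpha_i$. Your remark about choosing the group bound rather than the general Abelian-algebra bound is exactly the right point of care, and it is what yields the exponent $\alpha_i^2$.
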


\section{Main}

We want to find a finite set of compatible relations which entails the set of all compatible relations. The following lemma expresses that we can restrict our problem to completely meet-irreducible compatible relations.

\begin{lemma}\label{L:restricitoninfirreductibles}
Let~$\bA$ be a finite Abelian algebra. Let $n$ be a positive integer. Denote by $\cF$ the set of all $n$-ary completely meet-irreducible relations compatible with~$\bA$. Let $R$ be an $n$-ary relation compatible with~$\bA$. Then $\cF$ entails $R$.
\end{lemma}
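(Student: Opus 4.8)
The plan is to realize $R$ as an intersection of completely meet-irreducible compatible relations and then to appeal to the elementary entailment result of Lemma~\ref{L:entailementFaciles}(1).

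First I would observe that the $n$-ary relations compatible with~$\bA$ are exactly the subuniverses of~$\bA^n$, and that this collection---call it~$\cL$---is closed under arbitrary intersection, with the convention that the empty intersection is $A^n$. Hence $\cL$, ordered by inclusion, is a complete lattice whose meet operation is intersection; since~$\bA$ is finite, $\bA^n$ is finite and therefore so is~$\cL$. In particular $R\in\cL$.

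The key step is the standard fact that in a finite lattice every element is the meet of the completely meet-irreducible elements lying above it. Applying this to $R\in\cL$, I would write $R=\bigcap\cM$, where $\cM=\setm{M\in\cL}{M\text{ is completely meet-irreducible in }\cL\text{ and }M\supseteq R}$. By construction each member of~$\cM$ is an $n$-ary completely meet-irreducible relation compatible with~$\bA$, so $\cM\subseteq\cF$. If $R=A^n$ this is the empty meet and $\cM=\es$, which causes no difficulty since every operation is compatible with the full relation $A^n$.

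Finally, Lemma~\ref{L:entailementFaciles}(1) gives that $\cM$ entails $\bigcap\cM=R$. Since entailment is monotone in the entailing family---any operation compatible with every relation in~$\cF$ is, a fortiori, compatible with every relation in the subfamily $\cM\subseteq\cF$---it follows that~$\cF$ entails~$R$, as desired. I do not anticipate a genuine obstacle: the only point requiring a word of care is that ``completely meet-irreducible'' coincides with ``meet-irreducible'' in the finite lattice~$\cL$, and that $R$ is recovered as the meet of the irreducibles above it, which is just the finite case of the decomposition of an element of an algebraic lattice into completely meet-irreducibles.
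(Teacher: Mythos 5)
Your proposal is correct and follows essentially the same route as the paper: write $R$ as the intersection of the (finitely many) completely meet-irreducible compatible relations containing it, then apply Lemma~\ref{L:entailementFaciles}(1). The extra remarks on the lattice structure, the empty-meet case $R=A^n$, and monotonicity of entailment are fine but not needed beyond what the paper's own two-line argument already contains.
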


\begin{proof}
Let~$\bB$ be a subalgebra of~$\bA^n$. Note that $B$ is the intersection of all underlying sets of completely meet-irreducible subalgebras of~$\bA^n$ containing $B$. However this set is finite (as~$\bA$ is finite), and is contained in $\cF$. We can conclude with Lemma~\ref{L:entailementFaciles}(1) that $\cF$ entails $R$.
\end{proof}

As an immediate consequence of Theorem~\ref{T:entaildualise} and Lemma~\ref{L:restricitoninfirreductibles}

\begin{corollary}
Let~$\bA$ be an algebra. Let $\cR$ be a finite set of relations compatible with~$\bA$. If $\cR$ entails each meet-irreducible subalgebras of $A^n$ for each $n\in\mathbb{N}$, then $\alg{A;\cR}$ dualizes~$\bA$.
\end{corollary}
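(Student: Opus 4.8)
The plan is to read this as a statement about a finite algebra $\bA$, so that the two cited results apply, and to deduce it purely from the transitivity of the entailment relation. First I would record that transitivity explicitly: if a family $\cF$ entails every member of a family $\cG$, and $\cG$ entails a relation $R$, then $\cF$ entails $R$. This is immediate from the definition of entailment, since any $p\colon A^n\to A$ compatible with every element of $\cF$ is then compatible with every element of $\cG$, and hence compatible with $R$.

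Next I would fix an arbitrary relation $R$ compatible with $\bA$, say of arity $n$, and show that $\cR$ entails $R$. Let $\cF$ denote the set of all $n$-ary completely meet-irreducible relations compatible with $\bA$. By Lemma~\ref{L:restricitoninfirreductibles}, $\cF$ entails $R$. I would then identify $n$-ary relations compatible with $\bA$ with subalgebras of $\bA^n$; under this identification the completely meet-irreducible compatible relations are exactly the completely meet-irreducible elements of the subalgebra lattice of $\bA^n$. Hence every member of $\cF$ is a completely meet-irreducible subalgebra of $A^n$, so by hypothesis $\cR$ entails each of them, that is, $\cR$ entails $\cF$. By the transitivity noted above, $\cR$ entails $R$.

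Since $R$ was an arbitrary compatible relation, this shows that $\cR$ entails every relation compatible with $\bA$. As $\cR$ is finite and consists of relations compatible with $\bA$, Theorem~\ref{T:entaildualise} then yields that $\alg{A;\cR}$ dualizes $\bA$, which completes the argument.

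On the question of obstacles: there is essentially no hard step here, which is exactly why the statement is offered as an immediate corollary. The only points demanding care are the (routine) transitivity of entailment and the bookkeeping that reconciles the hypothesis, phrased in terms of completely meet-irreducible subalgebras of $A^n$, with Lemma~\ref{L:restricitoninfirreductibles}, phrased in terms of completely meet-irreducible compatible relations; these describe the same objects once $n$-ary compatible relations are identified with subuniverses of $\bA^n$.
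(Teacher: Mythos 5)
Your proposal is correct and matches the paper's intent exactly: the paper offers no written proof, introducing the corollary as an immediate consequence of Theorem~\ref{T:entaildualise} and Lemma~\ref{L:restricitoninfirreductibles}, and your argument (transitivity of entailment plus the identification of $n$-ary compatible relations with subuniverses of $\bA^n$) is precisely the routine verification being elided.
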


\begin{notation}
Let~$\bA$ and~$\bS$ be Abelian algebras, let $k\colon\bA\to\bS$ be a morphism. 
\begin{enumerate}
\item Set $\cH_k(\bA^2,\bS)=\setm{f\colon\bA^2\to\bS}{\forall x\in A\ ,f(x,x)=k(x)}$.
\item Let $f\colon\bA^2\to\bS$, let $a\in A$. We set $f_a\colon A\to S$, $x\mapsto f(a,x)$.
\end{enumerate}
\end{notation}

We refer to Remark~\ref{R:AlgebreAffine} for the various definitions of $+$.

\begin{lemma}\label{L:groupeidempotent}
Let~$\bA$ and~$\bS$ be Abelian algebras, let $k\colon\bA\to\bS$ be a morphism. The following statements hold:
\begin{enumerate}
\item The set $\cH_k(\bA^2,\bS)$ is the underlying set of a subalgebra of $\alg{\Hom(\bA^2,\bS);t}$.
\item The map $\overline k\colon\bA^2\to\bS$, $(x,y)\mapsto k(y)$, belongs to $\cH_k(\bA^2,\bS)$.
\item Pick $a\in A$. Then the following map
\begin{align*}
\Psi\colon \alg{\cH_k(\bA^2,\bS);+^{\overline k}} &\to \alg{\Hom( \alg{A;+^a},\alg{S;+^{k(a)}}); +^k}\\
f &\mapsto f_a\,,
\end{align*}
is an embedding of groups.
\item Let $j\colon\bA\to\bS$. Denote $\overline j\colon\bA^2\to\bS$, $(x,y)\mapsto j(y)$. Then $\alg{\cH_k(\bA^2,\bS);+^{\overline k}}$ is isomorphic to $\alg{\cH_j(\bA^2,\bS);+^{\overline j}}$.
\end{enumerate}
\end{lemma}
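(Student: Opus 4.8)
The plan is to dispatch (1) and (2) directly, to treat (3) as the technical core, and to prove (4) by an explicit translation rather than by appealing to (3). Throughout I use that the affine term $t$ satisfies $t(u,u,u)=u$ and acts coordinatewise on $\bA^2$, and I keep the various group operations of Remark~\ref{R:AlgebreAffine} at hand.

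For (1) and (2), I would first record that $\Hom(\bA^2,\bS)$ is closed under the pointwise action of $t$, being a subalgebra of $\alg{S;t}^{A^2}$, so that only the diagonal condition $f(x,x)=k(x)$ needs attention. If $f,g,h\in\cH_k(\bA^2,\bS)$ then for every $x\in A$ we get $t(f,g,h)(x,x)=t(f(x,x),g(x,x),h(x,x))=t(k(x),k(x),k(x))=k(x)$; hence $\cH_k(\bA^2,\bS)$ is closed under $t$. For (2), the map $\overline k=k\circ\pi_2$ is a composite of homomorphisms, so $\overline k\in\Hom(\bA^2,\bS)$, and $\overline k(x,x)=k(x)$, giving $\overline k\in\cH_k(\bA^2,\bS)$; in particular the set is nonempty, so together with closure under $t$ it is the underlying set of a (nonempty) subalgebra, and the group operation $+^{\overline k}$ is available with neutral element $\overline k$.

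For (3), fix $a\in A$. First I would check that $\Psi$ is well defined, i.e. that $f_a$ is a homomorphism $\alg{A;+^a}\to\alg{S;+^{k(a)}}$: since $t$ acts coordinatewise and $f(a,a)=k(a)$, one computes $f_a(x+^a y)=f(a,t(x,a,y))=f\bigl(t((a,x),(a,a),(a,y))\bigr)=t(f(a,x),k(a),f(a,y))=f_a(x)+^{k(a)}f_a(y)$. That $\Psi$ preserves the group operation is then a one-line pointwise check: evaluating at $x$, both $\Psi(f+^{\overline k}g)(x)$ and $(\Psi(f)+^k\Psi(g))(x)$ equal $t(f(a,x),k(x),g(a,x))$, using $\overline k(a,x)=k(x)$ and that $+^k$ is pointwise $t(\,\cdot\,,k,\,\cdot\,)$. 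The one genuinely clever step, and the main obstacle, is injectivity: I would exhibit the identity $t((u,u),(a,u),(a,x))=(u,x)$ in $\bA^2$ (checked coordinatewise: $t(u,a,a)=u$ and $t(u,u,x)=x$), apply $f$, and use $f(u,u)=k(u)$ to obtain the reconstruction formula $f(u,x)=t(k(u),f_a(u),f_a(x))$. Thus $f$ is determined by $f_a$ and the fixed morphism $k$, so $f_a=g_a$ forces $f=g$ and $\Psi$ is injective; everything outside this formula is bookkeeping with $+^a$, $+^{k(a)}$, $+^k$, $+^{\overline k}$.

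For (4), rather than deduce it from (3) I would write down an isomorphism directly. Define $\Phi\colon\cH_k(\bA^2,\bS)\to\cH_j(\bA^2,\bS)$ by $\Phi(f)=t(f,\overline k,\overline j)$, the pointwise $t$ of the three homomorphisms $f,\overline k,\overline j\in\Hom(\bA^2,\bS)$. Then $\Phi(f)\in\Hom(\bA^2,\bS)$ by closure under $t$, and $\Phi(f)(x,x)=t(k(x),k(x),j(x))=j(x)$, so $\Phi(f)\in\cH_j(\bA^2,\bS)$. The assignment $g\mapsto t(g,\overline j,\overline k)$ is a two-sided inverse, by the pointwise identity $t(t(F,K,J),J,K)=F$, so $\Phi$ is a bijection. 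Finally $\Phi(f+^{\overline k}g)=\Phi(f)+^{\overline j}\Phi(g)$ reduces to a routine Abelian-group identity: writing $t(p,q,r)=p-q+r$ in any group structure compatible with $t$, both sides evaluate at $(u,x)$ to $f(u,x)+g(u,x)-2k(x)+j(x)$. Hence $\Phi$ is a group isomorphism, proving (4).
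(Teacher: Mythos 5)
Your proposal is correct and follows essentially the same route as the paper: the closure computation for (1)--(2), the map $f\mapsto f_a$ with injectivity obtained by reconstructing $f(u,x)$ from its values on the diagonal and on $\set{a}\times A$ via the affine term, and for (4) the very same translation $\Phi(f)=t(f,\overline k,\overline j)$ with inverse $g\mapsto t(g,\overline j,\overline k)$. The only cosmetic differences are that the paper phrases injectivity of $\Psi$ as triviality of its kernel, and verifies that $\Phi$ is a group morphism by noting it preserves $t$ and sends $\overline k$ to $\overline j$ rather than by your direct pointwise computation.
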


\begin{proof}
Let $f,g,h\in\cH_k(\bA^2,\bS)$. Let $x\in A$, the following equalities hold 
\[
t(f,g,h)(x,x) = t(f(x,x),g(x,x),h(x,x)) = t(k(x),k(x),k(x)) = k(x)\,.
\]
Hence $t(f,g,h)\in\cH_k(\bA^2,\bS)$, that is, $\cH_k(\bA^2,\bS)$ is the underlying set of a subalgebra of $\alg{\Hom(\bA^2,\bS);t}$. Therefore (1) holds.

Set $\overline k\colon\bA^2\to\bS$, $(x,y)\mapsto k(y)$. First note that $\overline k\colon\bA^2\to\bS$ is a morphism. Moreover, given $x\in A$, we have $\overline k(x,x)=k(x)$, hence $\overline k\in\cH_k(\bA^2,\bS)$. That is, (2) holds.

Pick $a\in A$. Note that $f\colon A^2\to S$ and $A\to S$, $x\mapsto a$, preserve $t$, therefore $f_a$ preserves $t$, moreover $f_a(a)=f(a,a)=k(a)$, thus $f_a$ preserves the neutral element. Therefore $\Psi(f)=f_a\colon \alg{A;+^a}\to\alg{S;+^{k(a)}}$ is a morphism of groups.

Let $f,g\in \cH_k(\bA^2,\bS)$. Let $x\in A$. The following equalities hold
\begin{align*}
(f+^{\overline k}g)_a(x) &= (t(f,\overline k,g))_a(x)\,, &&\text{by definition of $+^{\overline k}$.}\\
& = t(f,\overline k,g)(a,x)\,, &&\text{by definition of $(t(f,\overline k,g))_a$.}\\
& = t(f(a,x),\overline k(a,x),g(a,x))\,, &&\text{by definition of $t(f,\overline k,g)$.}\\
& = t(f_a(x),k(x),g_a(x))\,, &&\text{by definition of $\overline k$, $f_a$, and $g_a$.}\\
& = t(f_a,k,g_a)(x)\,, &&\text{by definition of $t(f_a,k,g_a)$.}\\
& = (f_a +^k g_a)(x)\,, &&\text{by definition of $+^k$.}
\end{align*}
Therefore $\Psi(f+^{\overline k}g)= \Psi(f) +^{k}  \Psi(g)$, so $\Psi$ is a morphism of groups.

We now prove that the kernel of $\Psi$ is reduced to $\set{\overline k}$. Let $f\in\cH_k(\bA^2,\bS)$ such that $\Psi(f)=k$, that is
\begin{equation}\label{E:k1}
f(a,x)=k(x)\,,\quad\text{for all $x\in A$.}
\end{equation}
Therefore the following equalities hold
\begin{align*}
f(x,y) &=f( t(a,a,x), t(y,x,x))\\
& = t(f(a,y),f(a,x),f(x,x))\,, &&\text{as $f$ compatible with $t$.}\\
& = t(k(y),k(x),k(x))\,, &&\text{by \eqref{E:k1} and as $f(x,x)=k(x)$.}\\
& = k(y)\\
& = \overline k(x,y)
\end{align*}
So $f=\overline k$. Therefore $\Psi$ is an embedding. Hence (3) holds.

Let $j\colon\bA\to\bS$. Set $\overline j\colon\bA^2\to\bS$, $(x,y)\mapsto j(y)$. We consider
\begin{align*}
\Phi\colon \cH_k(\bA^2,\bS)&\to \cH_j(\bA^2,\bS)\\
f&\mapsto t(f,\overline k,\overline j)\,.
\end{align*}
Given $f\in \cH_k(\bA^2,\bS)$, the map $t(f,\overline k,\overline j)$ is a composition of morphisms, and so is a morphism. Moreover, given $x\in A$, the following equalities hold
\begin{align*}
t(f,\overline k,\overline j)(x,x)= t(f(x,x),\overline k(x,x),\overline j(x,x)) = t(k(x),k(x),j(x))=j(x)\,.
\end{align*}
Therefore $\Phi(f)=t(f,\overline k,\overline j)$ belongs to $\cH_j(\bA^2,\bS)$. Also note that $t$ and constant maps are all compatible with $t$, thus $\Phi$ is compatible with $t$. Moreover $\Phi(\overline k) = t(\overline k,\overline k,\overline j)=\overline j$, that is, $\Phi$ maps the neutral element of $+^{\overline k}$ to the neutral element of $+^{\overline j}$. Therefore $\Phi\colon \alg{\cH_k(\bA^2,\bS);+^{\overline k}}\to \alg{\cH_j(\bA^2,\bS);+^{\overline j}}$ is a morphism of groups.

Similarly $\alg{\cH_j(\bA^2,\bS);+^{\overline j}}\to \alg{\cH_k(\bA^2,\bS);+^{\overline k}}$, $f\mapsto t(f,\overline j,\overline k)$ is a morphism, which is the inverse of $\Phi$. Thus (4) holds.
\end{proof}

\begin{remark}
From Lemma~\ref{L:groupeidempotent}(4) we see that the group $\alg{\cH_k(\bA^2,\bS);+^{\overline k}}$ does not depend on the choice of $k$. This group is well-defined (up to isomorphism) if there exists at least one morphism $k\colon\bA\to\bS$, in this case we denote this group $\bcH(\bA^2,\bS)$.
\end{remark}

The following corollary is a consequence of Lemma~\ref{L:groupeidempotent}(3) and Lemma~\ref{L:TaileFamilleGeneratrice}.

\begin{corollary}\label{C:cardinalHAS}
Let~$\bA$ and~$\bS$ be Abelian algebras. Assume that $\card A=p_1^{\alpha_1}\dots p_k^{\alpha_k}$ and $\card S= p_1^{\beta_1}\dots p_k^{\beta_k}$, where $p_1,\dots,p_k$ are distinct primes. Then $\card{\cH(\bA^2,\bS)}$ divides $p_1^{\alpha_1\beta_1}\dots p_k^{\alpha_k\beta_k}$, in particular $\bcH(\bA^2,\bS)$ has a generating family with $\max_{1\le i\le k}(\alpha_i\beta_i)$ elements.
\end{corollary}

Morphisms of a large enough power of~$\bA$ to~$\bS$ can be factorized through a smaller power, moreover the factorization is through a morphism that can be expressed with~$t$. Note that, if there is a morphism $\bA^n\to\bS$, then $\bcH(\bA^2,\bS)$ is well-defined.

\begin{lemma}\label{L:factoriser}
Let~$\bA$ and~$\bS$ be algebras in a congruence-modular variety of Abelian algebras. Let $N$ be positive integer such that $\bcH(\bA^2,\bS)$ has a family of generators with $N$ elements. Let $n\ge 1$ be an integer. Let $f\colon\bA^n\to\bS$ be a morphism. There exist a morphism $g\colon \bA^{N+1}\to\bS$ and terms $p_1,\dots,p_{N+1}\colon\bA^n\to\bA$ in~$t$ such that $f(\vec x)= g(p_1(\vec x),p_2(\vec x),\dots,p_{N+1}(\vec x))$, for all $\vec x\in A^n$.
\end{lemma}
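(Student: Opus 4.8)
The plan is to decompose $f$ along the coordinate axes, encode the resulting one-variable pieces inside two-variable morphisms lying in $\cH_k(\bA^2,\bS)$, and then collapse everything using that $\bcH(\bA^2,\bS)$ is generated by $N$ elements. First I fix a base point $a\in A$ and set $k\colon\bA\to\bS$, $x\mapsto f(x,\dots,x)$; this is a morphism, being the composite of $f$ with the diagonal embedding (a tuple of projections), so $\bcH(\bA^2,\bS)=\alg{\cH_k(\bA^2,\bS);+^{\overline k}}$ is well-defined, and I fix a generating family $h_1,\dots,h_N$ of this group. For each $1\le i\le n$ I consider the map $h^{(i)}\colon\bA^2\to\bS$, $(x,y)\mapsto f(x,\dots,x,y,x,\dots,x)$ with $y$ in the $i$-th place. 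Each $h^{(i)}$ is a morphism (composite of $f$ with a tuple of projections) and satisfies $h^{(i)}(x,x)=k(x)$, so $h^{(i)}\in\cH_k(\bA^2,\bS)$. The pivotal decomposition is the identity $f(\vec x)=\sum_{i=1}^n{}^{+^{k(w)}}h^{(i)}(w,x_i)$, valid for every reference $w\in A$; I will use $w=x_1$. It follows from the fact that $f$ preserves $t$, by expressing $\vec x$ as a $t$-combination of the tuples having all coordinates equal to $x_1$ except one.

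Next I expand each $h^{(i)}$ over the generators: in the group $\bcH(\bA^2,\bS)$ we may write $h^{(i)}=\sum_{j=1}^N m_{ij}\,h_j$ for integers $m_{ij}$. Put $M_j=\sum_{i=1}^n m_{ij}$ and define the affine terms
\[
p_j(\vec x)=\sum_{i=1}^n m_{ij}\,x_i+(1-M_j)\,x_1\qquad(1\le j\le N),\qquad p_{N+1}(\vec x)=x_1 .
\]
Each $p_j$ has coefficient sum $M_j+(1-M_j)=1$, hence is a term in $t$ by Lemma~\ref{L:TermesAffines}; the extra summand $(1-M_j)x_1$ is precisely the correction that turns the ambient group-combination $\sum_i m_{ij}x_i$ (of coefficient sum $M_j$) into an honest $t$-term. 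I then define $g\colon\bA^{N+1}\to\bS$ by $g(y_1,\dots,y_{N+1})=\sum_{j=1}^N{}^{+^{k(y_{N+1})}}h_j(y_{N+1},y_j)$, and I claim that $g(p_1(\vec x),\dots,p_N(\vec x),x_1)=f(\vec x)$.

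Two things must be checked. That $g$ is a genuine morphism: crucially I keep both arguments of each $h_j$ genuine (the first argument being the reference coordinate $y_{N+1}$), so $g$ is a term in $t$ applied to the morphisms $\vec y\mapsto h_j(y_{N+1},y_j)$ and $\vec y\mapsto k(y_{N+1})$ from $\bA^{N+1}$ to $\bS$; since $t$ is compatible with all operations, $\Hom(\bA^{N+1},\bS)$ is a subalgebra of $\alg{S;t}^{A^{N+1}}$, i.e. is closed under $t$, so $g$ is automatically a morphism. (This is why I do not build $g$ from the one-variable maps $h_j(a,\cdot)$, which preserve only the additive structure, $a$ being in general non-idempotent.) The identity itself is verified by substituting $y_{N+1}=x_1$ and comparing, in the affine group $\alg{S;+^{k(x_1)}}$, the expression $\sum_j{}^{+^{k(x_1)}}h_j(x_1,p_j(\vec x))$ with $f(\vec x)=\sum_i{}^{+^{k(x_1)}}\sum_j{}^{+^{k(x_i)}}m_{ij}\,h_j(x_1,x_i)$; this reduces to matching the additive parts in the second variable, and the required relation holds because $k(x)=f(x,\dots,x)$ forces the additive part of $k$ to equal $\sum_j M_j$ times the additive part of $h_j(x_1,\cdot)$.

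The main obstacle is exactly this final reconciliation: simultaneously keeping the $p_j$ honest terms in $t$ (no constants, coefficient sum $1$, achieved through the $x_1$-correction) while ensuring that $g$ is a genuine morphism rather than merely a homomorphism of the underlying abelian groups (achieved by keeping both arguments of every $h_j$ as honest affine functions of $\vec x$ and invoking closure of $\Hom$ under $t$). The bookkeeping with the various base points $k(x_1)$ and $k(x_i)$ and with the coefficient sums $M_j$ is the delicate part; once it is carried out, the count is exactly as stated, the $N$ generators of $\bcH(\bA^2,\bS)$ producing the $N$ terms $p_1,\dots,p_N$ and the reference coordinate $p_{N+1}=x_1$ completing the tuple.
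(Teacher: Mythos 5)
Your overall strategy is exactly the paper's: telescope $f$ into two-variable slices lying in $\cH_k(\bA^2,\bS)$, expand each slice over the $N$ generators, and repackage the integer coefficients into affine terms $p_j$ feeding a fixed morphism $g$. But there is a concrete error in the bookkeeping, caused by a mismatch between your argument-order convention and the neutral element of the group. You put the reference coordinate in the \emph{first} slot of $h^{(i)}$ (so the varying coordinate $x_i$ sits in the second slot), while the group structure you invoke is $+^{\overline k}$ with $\overline k(x,y)=k(y)$, whose neutral element depends on the \emph{second} slot. Expanding $h^{(i)}=\sum_j m_{ij}h_j$ in that group and evaluating at $(x_1,x_i)$ gives, in the ambient group of $S$,
\[
h^{(i)}(x_1,x_i)=\sum_j m_{ij}h_j(x_1,x_i)+\Bigl(1-\sum_j m_{ij}\Bigr)k(x_i),
\]
so the correction term carries $k(x_i)$, which varies with $i$ and cannot be recovered from the single reference coordinate $p_{N+1}(\vec x)=x_1$. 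Carrying your computation through, $f(\vec x)-g(p_1(\vec x),\dots,p_N(\vec x),x_1)=\sum_i\bigl(1-\sum_j m_{ij}\bigr)\bigl(k(x_i)-k(x_1)\bigr)$, which does not vanish in general. Your closing claim that ``$k(x)=f(x,\dots,x)$ forces the additive part of $k$ to equal $\sum_j M_j$ times the additive part of $h_j(x_1,\cdot)$'' is not a consequence of anything: the diagonal condition $h^{(i)}(x,x)=k(x)$ is satisfied identically by every $+^{\overline k}$-combination of the $h_j$ and places no constraint on $\sum_j m_{ij}$.

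A minimal counterexample: take $\bA=\bS=\alg{\mathbb{Z}_2;t}$ with $t(x,y,z)=x+y+z$, $n=2$, and $f(x_1,x_2)=x_2$. Here $k=\id$, $\cH_k(\bA^2,\bS)=\set{(x,y)\mapsto x,\ (x,y)\mapsto y}$, the neutral element is $\overline k(x,y)=y$, and $h_1(x,y)=x$ generates, so $N=1$. Your $h^{(1)}(x,y)=f(y,x)=x=h_1$ and $h^{(2)}(x,y)=f(x,y)=y=\overline k$, hence $m_{11}=1$, $m_{21}=0$, $M_1=1$, $p_1(\vec x)=x_1$, and $g(y_1,y_2)=h_1(y_2,y_1)=y_2$, whence $g(p_1(\vec x),x_1)=x_1\neq x_2=f(\vec x)$. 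The repair is precisely the paper's convention: put the varying coordinate in the slot on which $\overline k$ does \emph{not} depend (define $h^{(i)}(x,y)=f(y,\dots,y,x,y,\dots,y)$ with $x$ at position $i$, and use $h_j(y_j,y_{N+1})$ inside $g$), or equivalently take $(x,y)\mapsto k(x)$ as the neutral element of the group. With that single change, all correction terms become multiples of $k(x_1)$ and cancel, and your argument becomes the paper's proof.
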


\begin{proof}
Let $h_1,\dots,h_N$ be a family generating $\bcH(\bA^2,\bS)$. Given $1\le i\le n$, we consider the morphism $f_i$ defined by
\begin{align*}
f_i\colon \bA^2&\to\bS\\
(x,y)&\mapsto f(y,\dots,y,x,y,\dots,y)\,,\text{where $x$ appears at position $i$.}
\end{align*}
We also define $k\colon\bA\to\bS$, $x\mapsto f(x,\dots,x)$, and $\overline k\colon\bA^2\to\bS$, $(x,y)\mapsto k(y)$.

We can assume that $\bcH(\bA^2,\bS)=\alg{\cH_k(\bA^2,\bS);+^{\overline k}}$. Note that $f_i(x,x)=k(x)$ for all $x\in A$, so $f_i\in\cH_k(\bA^2,\bS)$ for all $1\le i\le n$. Thus there are integers $u_1^i,\dots,u_N^i$ such that
\begin{equation*}
f_i=\sum_{j=1}^N u_j^ih_j\,,\quad\text{for all $1\le i\le n$.}
\end{equation*}
Note that the sum might not be a term in $t$. However $\overline k$ is the neutral element, hence
\begin{equation}\label{E:sumf}
f_i=\sum_{j=1}^N(u_j^ih_j-u_j^i\overline k) + \overline k\,.
\end{equation}
By Lemma~\ref{L:TermesAffines}, the sum in \eqref{E:sumf} is a term in $t$, therefore the following equality holds for all $x,y\in A$ and all $1\le i\le n$.
\begin{equation*}
f_i(x,y) = \sum_{j=1}^N (u_j^ih_j(x,y) - u_j^i\overline k(x,y)) + \overline k(x,y)\,.
\end{equation*}
Note that $\overline k(x,y)=k(y)=f_j(y,y)=h_j(y,y)$. Therefore
\begin{equation}\label{E:fiAvechj}
f_i(x,y) - f_i(y,y) = \sum_{j=1}^N (u_j^ih_j(x,y) - u_j^i h_j(y,y)) \,.
\end{equation}

Given $1\le i\le n$, $z\in A$, and $\vec x\in A^n$, the following equalities hold
\begin{align*}
f(\vec x)-(f_i(x_i,z)-f_i(z,z))&=f(x_1,\dots,x_n)-f(z,\dots,z,x_i,z,\dots,z)+f(z,\dots,z)\\
&=f(x_1,\dots,x_{i-1},z,x_{i+1},\dots,x_n)
\end{align*}
Inductively we deduce that
\begin{equation*}
f(\vec x)-\sum_{i=1}^n(f_i(x_i,z)-f_i(z,z)) = f(z,\dots,z)=k(z)\,.
\end{equation*}
In particular for $z=x_1$ we obtain
\begin{equation}\label{E:calculerfAvecfi}
f(\vec x)=\sum_{i=1}^n(f_i(x_i,x_1)-f_i(x_1,x_1)) + k(x_1)\,,\quad\text{for all $\vec x\in A^n$.}
\end{equation}

Define $p_{N+1}\colon \bA^n\to \bA$, $\vec x\mapsto x_1$. Given $1\le j\le N$, we consider $p_j$ defined by
\begin{equation}
p_j(x_1,\dots,x_n)=\sum_{i=1}^n\left(u_j^i x_i- u_j^i x_1\right)+x_1
\end{equation}
It follows from Lemma~\ref{L:TermesAffines} that $p_j$ is a term in $t$, in particular $p_j$ is a morphism. Given $\vec x\in A^n$ and $z\in A$, the following equalities hold
\begin{align*}
h_j(p_j(\vec x),z) &= h_j\left(\sum_{i=1}^n\big(u_j^i x_i- u_j^i x_1\big)+x_1,z\right)\\
&= h_j\left(\sum_{i=1}^n\big(u_j^i x_i- u_j^i x_1\big)+x_1,\sum_{i=1}^n\big(u_j^iz-u_j^iz\big)+z\right)\\
&=\sum_{i=1}^n\big(u_j^i h_j(x_i,z)- u_j^i h_j(x_1,z)\big) +  h_j(x_1,z)
\end{align*}
Therefore the following equality holds
\begin{equation}\label{E:hjpj}
h_j(p_j(\vec x),z) - h_j(x_1,z) = \sum_{i=1}^n\big(u_j^i h_j(x_i,z)- u_j^i h_j(x_1,z)\big) \,.
\end{equation}

Define $g$ by
\begin{align*}
g\colon \bA^N\times\bA&\to\bS\\
(y_1,\dots,y_N,z)&\mapsto\sum_{j=1}^N\left(h_j(y_j,z) - h_j(z,z)\right) + k(z)\,.
\end{align*}
Note that $g$ is a morphism. Denote by $p\colon\bA^n\to\bA^{N+1}$, $\vec x\mapsto (p_1(\vec x),\dots,p_{N+1}(\vec x))$.

The following equalities hold
\begin{align*}
g(p(\vec x)) &=g(p_1(\vec x),\dots,p_N(\vec x),x_1)\,,&&\text{as $p_{N+1}(\vec x)=x_1$.}\\
&= \sum_{j=1}^N\bigg(h_j(p_j(\vec x),x_1) - h_j(x_1,x_1)\bigg) + k(x_1)\,,&&\text{by definition of $g$.}\\
&= \sum_{j=1}^N\left(\sum_{i=1}^n\left(u_j^i h_j(x_i,x_1)- u_j^i h_j(x_1,x_1)\right)\right) + k(x_1)\,, &&\text{by \eqref{E:hjpj}.}\\
&=\sum_{i=1}^n\left( \sum_{j=1}^N (u_j^i h_j(x_i,x_1)- u_j^i h_j(x_1,x_1))\right)+ k(x_1)\,,&&\text{permuting the sum.}\\
&= \sum_{i=1}^n\left( f_i(x_i,x_1)- f_i(x_1,x_1)\right)+ k(x_1)\,, &&\text{by \eqref{E:fiAvechj}.}\\
&= f(x_1,\dots,x_n)\,, &&\text{by \eqref{E:calculerfAvecfi}.}
\end{align*}
That is, $f(x_1,\dots,x_n)=g(p_1(\vec x),p_2(\vec x),\dots,p_{N+1}(\vec x))$.
\end{proof}

\begin{lemma}\label{L:PrincipalEntailement}
Let~$\bA$ be an Abelian algebra. Let $N$ be a positive integer such that, for all subdirectly irreducible algebras~$\bS$ in $\Var\bA$, the group $\bcH(\bA^2,\bS)$ has a generating family with $N$ elements. We denote by $\cR_n$ the set of $n$-ary relation compatible with~$\bA$. Then $\cR_{N+1}\cup\set{t}$ entails $\cR=\bigcup_{n\ge 1}\cR_n$.
\end{lemma}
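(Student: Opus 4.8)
The plan is to peel the problem down, arity by arity, to completely meet-irreducible compatible relations, and then to exhibit each such relation as a pullback, along terms in~$t$, of a compatible relation of arity exactly $N+1$. First I record that entailment is transitive: if $p$ is compatible with every member of $\cR_{N+1}\cup\set{t}$, if this set entails a family $\cG$, and if $\cG$ entails a relation $R$, then $p$ is compatible with $R$. Consequently, by Lemma~\ref{L:restricitoninfirreductibles}, to prove that $\cR_{N+1}\cup\set{t}$ entails every $R\in\cR_n$ it suffices to prove that it entails every $n$-ary completely meet-irreducible relation compatible with~$\bA$, for every $n\ge 1$.

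So fix $n\ge 1$ and a completely meet-irreducible subalgebra $\bB$ of $\bA^n$, viewed as an $n$-ary compatible relation. Since $\bA^n$ is again Abelian in the same congruence-modular variety, Corollary~\ref{C:quotient} applied to $\bA^n$ yields a subdirectly irreducible $\bS\in\Var\bA$, a morphism $f\colon\bA^n\to\bS$, and an element $c\in S$ with $\set{c}$ the underlying set of a subalgebra of~$\bS$ and $B=\setm{\vec x\in A^n}{f(\vec x)=c}$. The diagonal $x\mapsto f(x,\dots,x)$ is a morphism $\bA\to\bS$, so $\bcH(\bA^2,\bS)$ is well-defined; and since $\bS$ is subdirectly irreducible in $\Var\bA$, the hypothesis on $N$ applies, giving $\bcH(\bA^2,\bS)$ a generating family with $N$ elements. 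Lemma~\ref{L:factoriser} then produces a morphism $g\colon\bA^{N+1}\to\bS$ and terms $p_1,\dots,p_{N+1}\colon\bA^n\to\bA$ in~$t$ with $f(\vec x)=g(p_1(\vec x),\dots,p_{N+1}(\vec x))$ for all $\vec x\in A^n$.

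Now set $R=\setm{\vec y\in A^{N+1}}{g(\vec y)=c}$. As the preimage of the subalgebra $\set{c}$ under the morphism $g$, it is a subalgebra of $\bA^{N+1}$, hence $R\in\cR_{N+1}$. The factorization gives
\[
B=\setm{\vec x\in A^n}{(p_1(\vec x),\dots,p_{N+1}(\vec x))\in R}\,,
\]
which is exactly the form treated in Lemma~\ref{L:entailementFaciles}(2) with the relation $R$ and the $t$-terms $p_1,\dots,p_{N+1}$. Hence $\set{R}\cup\set{t}$, and a fortiori $\cR_{N+1}\cup\set{t}$, entails $B$. Combined with the reduction of the first step, this shows $\cR_{N+1}\cup\set{t}$ entails every $R\in\cR_n$ for all $n\ge 1$, that is, it entails $\cR$.

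The main obstacle has already been cleared by Lemma~\ref{L:factoriser}: the uniform bound $N$ on the number of generators of $\bcH(\bA^2,\bS)$ is precisely what compresses a compatible relation of arbitrarily high arity down to arity $N+1$. The remaining care in the present argument is bookkeeping: checking that the $\bS$ from Corollary~\ref{C:quotient} is subdirectly irreducible in $\Var\bA$ (so the bound $N$ applies), that a morphism $\bA^n\to\bS$ exists (so $\bcH(\bA^2,\bS)$ is defined), and that the $p_j$ are genuine terms in~$t$ as needed for Lemma~\ref{L:entailementFaciles}(2).
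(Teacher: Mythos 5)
Your proof is correct and follows essentially the same route as the paper: reduce to completely meet-irreducible compatible relations via Lemma~\ref{L:restricitoninfirreductibles}, realize each such relation as $f^{-1}(\set{c})$ using Corollary~\ref{C:quotient}, factor $f$ through arity $N+1$ by Lemma~\ref{L:factoriser}, and conclude with Lemma~\ref{L:entailementFaciles}(2). Your added checks (that $\bS$ is subdirectly irreducible in $\Var\bA$, that $\bcH(\bA^2,\bS)$ is well-defined, and that the $p_j$ are terms in $t$) are sound and in fact tidy up details the paper leaves implicit.
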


\begin{proof}
Let $n\ge 1$. Let $R$ be a $n$-ary completely meet-irreducible compatible relation with~$\bA$. That is, $R$ is the underlying set of~$\bR$ a completely meet-irreducible subalgebra of~$\bA^{n}$.

Therefore it follows from Corollary~\ref{C:quotient} that there exist a subdirectly irreducible algebra~$\bS\in\Var\bA$, a morphism of algebras $f\colon \bA^{n}\to \bS$, and an element $c\in S$ such that $\set c$ is the underlying set of a subalgebra of~$\bS$, and
\begin{equation}\label{E:Restpreimage}
R=f^{-1}(\set c)=\setm{\vec x\in A^{n+1}}{f(\vec x)=c}
\end{equation}

From Lemma~\ref{L:factoriser} there is a morphism $g\colon\bA^N\to\bS$ and terms $p_1,\dots,p_{N+1}\colon\bA^{n}\to\bA$ in~$t$ such that
\begin{equation}\label{E:factor}
f(\vec x) = g(p_1(\vec x),p_2(\vec x),\dots,p_{N+1}(\vec x))\,,\quad\text{for all $\vec x\in A^{n+1}$.}
\end{equation}
Set $B=g^{-1}(\set{c})$, as $g\colon\bA^n\to\bS$ is a morphism and $\set{c}$ is the underlying set of a subalgebra of~$\bS$, it follows that $B$ is an $n$-ary relation compatible with~$\bA$, hence $B\in\cF_n$.

\begin{align*}
R&=\setm{\vec x\in\bA^{n}}{f(\vec x)=c}\,, &&\text{by \eqref{E:Restpreimage}.}\\
&=\setm{\vec x\in\bA^{n}}{g(p_1(\vec x),\dots,p_{N+1}(\vec x))=c}\,, &&\text{by \eqref{E:factor}.}\\
&=\setm{\vec x\in\bA^{n}}{ (p_1(\vec x),\dots,p_{N+1}(\vec x))\in B}\,, &&\text{by definition of $B$.}
\end{align*}
As $p_1,\dots,p_n$ are terms in~$t$, it follows from Lemma~\ref{L:entailementFaciles}(2) that $\set{B,t}$ entails $R$, and so $\cR_{N+1}\cup \set{t}$ entails $R$.

Therefore $\cR_{N+1}\cup \set{t}$ entails the set of all completely meet-irreducible relation compatible with~$\bA$. It follows from Lemma~\ref{L:restricitoninfirreductibles} that $\cR_{N+1}\cup \set{t}$ entails $\cR$.
\end{proof}

\begin{theorem}
Let~$\bA$ be a finite Abelian algebra in a congruence-modular variety. Assume that $\card A=p_1^{\alpha_1}p_2^{\alpha_2}\dots p_k^{\alpha_k}$, where $p_1,\dots,p_k$ are distinct primes. Set $N=\max(4,1+\max_{1\le i\le k}(\alpha_i^3))$. Denote by $\cF$ the set of all $N$-ary relations compatible with~$\bA$. Then $\alg{A;\cF}$ dualizes~$\bA$.
\end{theorem}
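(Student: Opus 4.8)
The plan is to invoke Theorem~\ref{T:entaildualise}. Since $\cF=\cR_N$ is finite (there are only finitely many subsets of the finite set $A^N$, hence only finitely many compatible $N$-ary relations) and consists of relations compatible with~$\bA$, it suffices to show that $\cF$ entails every relation compatible with~$\bA$, that is, $\cF$ entails $\cR=\bigcup_{n\ge1}\cR_n$. By Lemma~\ref{L:PrincipalEntailement} it is enough to exhibit a positive integer $M$ such that $\bcH(\bA^2,\bS)$ has a generating family with $M$ elements for \emph{every} subdirectly irreducible $\bS\in\Var\bA$, and then to verify that $\cF$ entails the set $\cR_{M+1}\cup\set{t}$; transitivity of entailment will then give that $\cF$ entails $\cR$.

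First I would pin down the uniform bound. Set $M=\max_{1\le i\le k}(\alpha_i^3)$. Let $\bS\in\Var\bA$ be subdirectly irreducible and write $\card S=p_1^{\beta_1}\dots p_k^{\beta_k}$. By Corollary~\ref{C:CardinalSI} the number $\card S$ divides $p_1^{\alpha_1^2}\dots p_k^{\alpha_k^2}$, so $\beta_i\le\alpha_i^2$ for each $i$. By Corollary~\ref{C:cardinalHAS} the group $\bcH(\bA^2,\bS)$ has a generating family with $\max_{1\le i\le k}(\alpha_i\beta_i)$ elements, and $\max_{1\le i\le k}(\alpha_i\beta_i)\le\max_{1\le i\le k}(\alpha_i\cdot\alpha_i^2)=M$. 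Since a group with a generating family of size at most $M$ also has one of size exactly $M$, the hypothesis of Lemma~\ref{L:PrincipalEntailement} holds with this uniform $M$, and therefore $\cR_{M+1}\cup\set{t}$ entails $\cR$.

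It remains to check that $\cF=\cR_N$ entails $\cR_{M+1}\cup\set{t}$, where $N=\max(4,1+M)$, so that $N\ge M+1$ and $N\ge 4$. The key tool is Lemma~\ref{L:entailementFaciles}(3): if $R$ is $m$-ary and $S=\setm{(x_1,\dots,x_m,x_m)}{(x_1,\dots,x_m)\in R}$, then $\set S$ entails $R$; moreover $S$ is again compatible, being the image of the subalgebra $R\le\bA^m$ under the homomorphism $\bA^m\to\bA^{m+1}$, $(x_1,\dots,x_m)\mapsto(x_1,\dots,x_m,x_m)$ (each coordinate is a projection). Iterating this duplication $N-m$ times turns any compatible $m$-ary relation with $m\le N$ into a compatible $N$-ary relation lying in $\cF$ that entails it. Applying this to each $R\in\cR_{M+1}$ (here $m=M+1\le N$) shows that $\cF$ entails every member of $\cR_{M+1}$. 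Applying it to the $4$-ary relation $\gra t$ (here $m=4\le N$, which is exactly where the clause $N\ge4$ is needed) produces a relation of $\cF$ entailing $\gra t$, whereupon Lemma~\ref{L:entailementFaciles}(4) gives that $\cF$ entails the operation~$t$. Hence $\cF$ entails $\cR_{M+1}\cup\set{t}$.

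Combining the two previous steps by transitivity of entailment, $\cF$ entails $\cR$, that is, every relation compatible with~$\bA$, and since $\cF$ is finite, Theorem~\ref{T:entaildualise} yields that $\alg{A;\cF}$ dualizes~$\bA$. The substantive content has all been carried out in Lemma~\ref{L:factoriser}, Lemma~\ref{L:PrincipalEntailement} and the counting results, so the only point to watch here is the arity bookkeeping. The one genuine subtlety is that the ternary term~$t$ enters through its $4$-ary graph, which forces $N\ge4$; this clause is binding precisely when every $\alpha_i=1$, since then $1+\max_{1\le i\le k}(\alpha_i^3)=2<4$.
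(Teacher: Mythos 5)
Your proposal is correct and follows essentially the same route as the paper: bound $\card S$ for subdirectly irreducibles via Corollary~\ref{C:CardinalSI}, bound the generating family of $\bcH(\bA^2,\bS)$ via Corollary~\ref{C:cardinalHAS}, apply Lemma~\ref{L:PrincipalEntailement}, recover the term $t$ from its $4$-ary graph via Lemma~\ref{L:entailementFaciles}(3)--(4), and finish with Theorem~\ref{T:entaildualise}. If anything, your handling of the arity bookkeeping (padding $(M+1)$-ary relations and $\gra t$ up to arity $N$ by coordinate duplication, and checking the padded relations remain compatible) is more explicit than the paper's.
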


\begin{proof}
Denote by $\cV$ the variety generated by~$\bA$. Let~$\bS$ be a subdirectly irreducible algebra in $\cV$. It follows from Corollary~\ref{C:CardinalSI} that $\card S$ divides $p_1^{\alpha_1^2}\dots p_k^{\alpha_k^2}$. Therefore by Corollary~\ref{C:cardinalHAS} $\bcH(\bA^2,\bS)$ has a generating family of size $\max_{1\le i\le k}(\alpha_i^3)$. Hence by Lemma~\ref{L:PrincipalEntailement}, it follows that $\cF\cup\set{t}$ entails all relations compatible with~$\bA$.

Note that $\gra t$ is a relation of arity $4$, hence by Lemma~\ref{L:entailementFaciles}(3) $\cF$ entails $\gra t$. It follows from Lemma~\ref{L:entailementFaciles}(4) that $\cF$ entails $t$. So $\cF$ entails all relations compatible with~$\bA$. Therefore it follows from Theorem~\ref{T:entaildualise} that $\alg{A;\cF}$ dualizes~$\bA$.
\end{proof}

\end{document}